\newtheorem{theorem}{Theorem}
\newtheorem{proposition}[theorem]{Proposition}
\newtheorem{lemma}[theorem]{Lemma}
\newtheorem{conjecture}[theorem]{Conjecture}
\title{A converse to the neo-classical inequality with an application to the Mittag-Leffler function}
\author{Stefan Gerhold \\
TU Wien \\
\tt{sgerhold@fam.tuwien.ac.at}
\and
Thomas Simon\\
Universit{\'e} de Lille\\
\tt{thomas.simon@univ-lille.fr}
}
\date{\today}
\numberwithin{equation}{section}
\numberwithin{theorem}{section}
\begin{document}

\maketitle

\begin{abstract}
We prove two inequalities for the Mittag-Leffler function, namely
that the function $\log E_\alpha(x^\alpha)$ is sub-additive for
$0<\alpha<1,$ and super-additive for $\alpha>1.$ These assertions
follow from two new binomial inequalities, one of which is a converse
to the neo-classical inequality. The proofs use a generalization
of the binomial theorem due to Hara and Hino (Bull.\ London Math.\ Soc. 2010).
For $0<\alpha<2,$ we also show that $E_\alpha(x^\alpha)$ is log-concave
resp.\ log-convex, using analytic as well as probabilistic
arguments.
\end{abstract}

MSC2020: 26D15, 33E12

\section{Introduction and main results}

The Mittag-Leffler function
\[
 E_\alpha(x) := \sum_{k=0}^\infty \frac{x^k}{\Gamma(\alpha k + 1)},
 \quad \alpha>0,
\]
is a well-known special function with a large number of applications in pure
and applied mathematics; see~\cite{GoKiMaRo20,HaMaSa11} for surveys.
Clearly, we have $E_1(x)=e^x.$
Somewhat surprisingly, the ``identity''
\begin{equation}\label{eq:wrong}
  E_\alpha((x+y)^\alpha) \stackrel{?}{=} E_\alpha(x^\alpha) E_\alpha(y^\alpha)
\end{equation}
has been used in a few papers. As discussed in~\cite{El16,PeLi10}, it is not
correct for $\alpha\neq1$. In \cite{PeLi10}, a correct identity involving integrals of $E_\alpha(x^\alpha)$ is
proven, which reduces to $e^{x+y}=e^x e^y$ as $\alpha\to1$.
Besides this, it seems natural to ask whether the left and right hand sides
of~\eqref{eq:wrong} are comparable. This is indeed the case:
\begin{theorem}\label{thm:ml}
 For $0<\alpha<1,$ we have
 \begin{equation}\label{eq:ML1}
   E_\alpha((x+y)^\alpha) \leq E_\alpha(x^\alpha) E_\alpha(y^\alpha),\quad
   x,y\geq 0,
 \end{equation}
 and for $\alpha>1$
 \begin{equation}\label{eq:ML2}
   E_\alpha((x+y)^\alpha) \geq E_\alpha(x^\alpha) E_\alpha(y^\alpha),\quad
   x,y\geq 0.
 \end{equation}
 These inequalities are strict for $x,y>0$.
\end{theorem}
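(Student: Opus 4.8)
The plan is to pass to the defining power series and reduce both inequalities to a single pair of term-by-term \emph{binomial} comparisons. By definition,
\[
 E_\alpha((x+y)^\alpha)=\sum_{n\ge 0}\frac{(x+y)^{n\alpha}}{\Gamma(n\alpha+1)},
\]
while, since $E_\alpha$ is entire and the series have nonnegative terms for $x,y\ge 0$, the Cauchy product gives
\[
 E_\alpha(x^\alpha)\,E_\alpha(y^\alpha)=\sum_{n\ge 0}\;\sum_{j=0}^{n}\frac{x^{j\alpha}\,y^{(n-j)\alpha}}{\Gamma(j\alpha+1)\,\Gamma((n-j)\alpha+1)}.
\]
It therefore suffices to prove, for every $n\ge 0$ and all $x,y\ge 0$,
\[
 \frac{(x+y)^{n\alpha}}{\Gamma(n\alpha+1)}\;\le\;\sum_{j=0}^{n}\frac{x^{j\alpha}\,y^{(n-j)\alpha}}{\Gamma(j\alpha+1)\,\Gamma((n-j)\alpha+1)}
\]
for $0<\alpha<1$, and the reverse inequality for $\alpha>1$. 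The first of these is precisely the neo-classical inequality in its sharp (constant-one) form, and the second is the converse announced in the abstract.

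To establish the two binomial inequalities I would invoke the generalized binomial theorem of Hara and Hino. Clearing $\Gamma(n\alpha+1)$ and normalizing by homogeneity (say $x+y=1$), the claim becomes a one-variable statement comparing $(x+y)^{n\alpha}$ with the sum weighted by the \emph{Gamma-binomial coefficients} $\Gamma(n\alpha+1)/\big(\Gamma(j\alpha+1)\Gamma((n-j)\alpha+1)\big)$, which collapse to the ordinary $\binom{n}{j}$ and yield equality exactly when $\alpha=1$. The Hara--Hino expansion supplies an exact series representation of the left-hand side, and the inequality should follow by showing that the resulting coefficients are dominated by, respectively dominate, the Gamma-binomial coefficients according as $\alpha<1$ or $\alpha>1$; here the log-convexity of $\Gamma$ and Gautschi/Wendel-type bounds on ratios of Gamma values are the natural tools.

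The main obstacle is the converse inequality for $\alpha>1$, which is the genuinely new ingredient. In that regime the contributions in the Hara--Hino expansion no longer share a common sign, so one cannot simply discard terms as in the classical neo-classical estimate; the coefficient comparison must instead be pushed through carefully, presumably by induction on $n$ combined with monotonicity and convexity properties of $\Gamma$. Once this sharp coefficient bound is in place, reassembling the series is routine. Finally, strictness for $x,y>0$ is immediate from a single term: the case $n=1$ reads $(x+y)^\alpha<x^\alpha+y^\alpha$ for $0<\alpha<1$ and $(x+y)^\alpha>x^\alpha+y^\alpha$ for $\alpha>1$, by strict sub- and super-additivity of $t\mapsto t^\alpha$; since all remaining terms respect the same (non-strict) inequality, the summed estimate is strict.
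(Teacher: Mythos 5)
Your reduction is exactly the paper's: expanding $E_\alpha((x+y)^\alpha)$ and taking the Cauchy product of $E_\alpha(x^\alpha)E_\alpha(y^\alpha)$ reduces Theorem~\ref{thm:ml} to the term-wise binomial inequalities of Theorem~\ref{thm:cnc}, just as in the computation~\eqref{eq:nc to ml}; and your remark that strictness already follows from the strict $n=1$ term $(x+y)^\alpha \lessgtr x^\alpha+y^\alpha$, given the non-strict inequality for all other $n$, is correct (and even a little more economical than invoking strictness of the binomial inequalities for every $k$). But there are two genuine problems with what follows. The first is a direction error: the inequality you need for $0<\alpha<1$, namely $\sum_{j=0}^n\binom{\alpha n}{\alpha j}x^{\alpha j}y^{\alpha(n-j)}\ge(x+y)^{\alpha n}$, is \emph{not} ``the neo-classical inequality in its sharp (constant-one) form.'' The neo-classical inequality~\eqref{eq:nc} bounds the binomial sum from \emph{above}: $\alpha\sum_j\binom{\alpha k}{\alpha j}x^{\alpha j}y^{\alpha(k-j)}\le(x+y)^{\alpha k}$. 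Dropping the factor $\alpha$ only weakens that upper bound, and what it yields is the complementary estimate $E_\alpha((x+y)^\alpha)\ge\alpha E_\alpha(x^\alpha)E_\alpha(y^\alpha)$, as in~\eqref{eq:nc to ml}. The \emph{lower} bound on the sum that you need is precisely the new \emph{converse} neo-classical inequality: both term-wise inequalities, not just the $\alpha>1$ one, are the unproven content of Theorem~\ref{thm:cnc} at the point where you invoke them.

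The second problem is that the sketch you give for proving them would not go through. The Hara--Hino theorem is not a series whose coefficients can be compared with the Gamma-binomial weights; it is the exact identity~\eqref{eq:binom},
\[
  \alpha\sum_{j=0}^k\binom{\alpha k}{\alpha j}\lambda^{\alpha j}
  =\sum_{\omega\in K_\alpha}(1+\lambda\omega)^{\alpha k}
  -\frac{\alpha\lambda^\alpha\sin\alpha\pi}{\pi}\int_0^1 F(t,\lambda,k)\,dt,
\]
so all the work lies in playing the root-of-unity sum off against the integral term, and the structure of that problem changes with $\alpha$: the cardinality of $K_\alpha$ and the sign of $\sin\alpha\pi$ depend on the parity of $\lfloor\alpha\rfloor$. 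The paper needs three sections for this: a uniform-in-$k$ infimum trick for $0<\alpha<1$ and for $\lfloor\alpha\rfloor$ odd, and, in the hardest case $\lfloor\alpha\rfloor$ even, a sharpened bound on the sum over $K_\alpha$ (Lemma~\ref{le:big la}, verified by cylindrical algebraic decomposition), explicit evaluations and $\lambda\downarrow0$ asymptotics of the integral (Lemmas~\ref{le:int} and~\ref{le:int asympt}), and a monotonicity argument in $\lambda$ (Lemma~\ref{le:decrease}). Log-convexity of $\Gamma$, Gautschi/Wendel-type ratio bounds, and induction on $n$ play no role, and there is no apparent way they could substitute for these estimates. (For $0<\alpha<2$ only, the paper does have a softer independent route that your plan bypasses: complete monotonicity representations showing $E_\alpha(x^\alpha)$ is log-concave resp.\ log-convex, Theorem~\ref{thm:logcc}; but that route cannot cover $\alpha\ge2$, where $\log E_\alpha(x^\alpha)$ fails to be convex, so the binomial inequality~\eqref{eq:cnc2 xy} really must be proven there.)
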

The strictness assertion strengthens the observation made at the beginning
of Section~2 of~\cite{El16}, where it is argued that the validity
of~\eqref{eq:wrong} \emph{for all} $x,y\geq0$ implies
$\alpha=1.$ According to Theorem~\ref{thm:ml}, this equality \emph{never} holds,
except in the obvious cases ($\alpha=1,$ or $xy=0$).
Although apparently not made explicit in the literature, the 
 lower estimate
\[
  E_\alpha((x+y)^\alpha) \geq \alpha E_\alpha(x^\alpha) E_\alpha(y^\alpha),\quad
   x,y\geq 0,\ 0<\alpha<1,
\]
complementing~\eqref{eq:ML1},  follows from the calculation
\begin{align}\label{eq:nc to ml}
\begin{split}
   E_\alpha((x+y)^\alpha) &=
     \sum_{k=0}^\infty \frac{(x+y)^{\alpha k}}{\Gamma(\alpha k+1)} \\
   &\geq \sum_{k=0}^\infty \frac{\alpha}{\Gamma(\alpha k+1)}
     \sum_{j=0}^k \binom{\alpha k}{\alpha j} x^{\alpha j}y^{\alpha(k-j)}\\
   &= \alpha \sum_{k=0}^\infty \sum_{j=0}^k \frac{x^{\alpha j}y^{\alpha(k-j)}}
     {\Gamma(\alpha j+1)\Gamma(\alpha(k-j)+1)} = \alpha E_\alpha(x^\alpha)
        E_\alpha(y^\alpha).
\end{split}
\end{align}
In the second line, we have used the following result.
\begin{theorem}[Neo-classical inequality; Theorem~1.2 in~\cite{HaHi10}]\label{thm:nc}
  For $k\in\mathbb N$ and $0<\alpha<1,$ we have
  \begin{equation}\label{eq:nc}
     \alpha \sum_{j=0}^k \binom{\alpha k}{\alpha j} x^{\alpha j} y^{\alpha(k-j)} \leq (x+y)^{\alpha k},
     \quad x,y\geq0.
  \end{equation}
\end{theorem}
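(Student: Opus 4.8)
The plan is to recast \eqref{eq:nc} probabilistically, turning it into a single moment inequality. Let $(L_t)_{t\ge0}$ denote the inverse of the standard $\alpha$-stable subordinator: it is self-similar of index $\alpha$, with $L_x\overset{d}{=}x^\alpha L_1$, and its moments are $\mathbb{E}[L_t^n]=n!\,t^{\alpha n}/\Gamma(\alpha n+1)$, equivalently $\mathbb{E}[e^{\lambda L_t}]=E_\alpha(\lambda t^\alpha)$. Writing $\binom{\alpha k}{\alpha j}=\Gamma(\alpha k+1)/(\Gamma(\alpha j+1)\Gamma(\alpha(k-j)+1))$ and factoring out $\Gamma(\alpha k+1)/k!$, the left-hand side of \eqref{eq:nc} becomes $\alpha\,(\Gamma(\alpha k+1)/k!)\sum_j\binom{k}{j}\mathbb{E}[L_x^j]\,\mathbb{E}[\tilde L_y^{k-j}]$ with $\tilde L$ an independent copy of $L$. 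By independence and the ordinary binomial theorem this sum is $\mathbb{E}[(L_x+\tilde L_y)^k]$, while $(x+y)^{\alpha k}=(\Gamma(\alpha k+1)/k!)\,\mathbb{E}[L_{x+y}^k]$. Thus \eqref{eq:nc} is equivalent to
\[
  \alpha\,\mathbb{E}\big[(L_x+\tilde L_y)^k\big]\le\mathbb{E}\big[L_{x+y}^k\big],\qquad k\in\mathbb N,
\]
for independent $L_x,\tilde L_y$.

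Next I would cut down the parameters. Self-similarity lets me divide by $(x+y)^{\alpha k}\mathbb{E}[L_1^k]$ and set $a=(x/(x+y))^\alpha$, $b=(y/(x+y))^\alpha$ (so $a^{1/\alpha}+b^{1/\alpha}=1$), reducing the target to the clean form $\alpha\,\mathbb{E}[(aL+b\tilde L)^k]\le\mathbb{E}[L^k]$ for i.i.d.\ copies $L,\tilde L$ of $L_1$. In purely analytic terms this is the one-variable statement $\alpha\sum_{j}\binom{\alpha k}{\alpha j}s^{\alpha j}(1-s)^{\alpha(k-j)}\le1$ on $s\in[0,1]$; since the boundary values at $s\in\{0,1\}$ are exactly $\alpha$, the whole content is the control of the interior maximum.

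To produce the constant I would exploit the renewal structure of $L$ through the overshoot of the subordinator. At level $x$ the subordinator lands at $x+O_x$ with overshoot $O_x\ge0$, and the strong Markov property gives the exact decomposition $L_{x+y}=L_x+\tilde L_{(y-O_x)^+}$ with $\tilde L$ a fresh independent copy. Since $(y-O_x)^+\le y$, a coupling immediately yields the easy bound $\mathbb{E}[L_{x+y}^k]\le\mathbb{E}[(L_x+\tilde L_y)^k]$ (this is $(x+y)^{\alpha k}\le\sum_j\binom{\alpha k}{\alpha j}x^{\alpha j}y^{\alpha(k-j)}$); the neo-classical inequality is the reverse direction, asserting that passing from $\tilde L_y$ to $\tilde L_{(y-O_x)^+}$ costs at most the factor $\alpha$, and this is governed by the explicit generalized-arcsine / Beta law of $O_x/x$.

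The hard part will be exactly this last step, because $\alpha$ is the \emph{optimal} constant, so any estimate that is even slightly lossy fails. Comparing the two $k$-th moments term by term in the binomial expansion is too crude: for indices where the overshoot is likely to consume almost all of $y$, a single contribution can be far below $\alpha$ times its counterpart, and the factor $\alpha$ emerges only after summing over all binomial terms and integrating against the exact overshoot density. Quantifying this coupling—not the clean reduction above—is where the real work lies. An entirely analytic alternative would replace the overshoot computation by Hara and Hino's fractional Taylor expansion of $(x+y)^{\alpha k}$, but the same sharp-constant bookkeeping then reappears in estimating the fractional remainder.
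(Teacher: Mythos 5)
Your probabilistic reduction is correct, but it is also fully reversible, and therein lies the problem: since $\mathbb{E}[L_x^j]=j!\,x^{\alpha j}/\Gamma(\alpha j+1)$, the passage from \eqref{eq:nc} to $\alpha\,\mathbb{E}[(L_x+\tilde L_y)^k]\le\mathbb{E}[L_{x+y}^k]$ is a term-by-term rewriting, so after the reduction the entire content of the theorem — the sharp constant $\alpha$ — is still unproved, and it sits exactly in the step you defer. Your overshoot decomposition $L_{x+y}\stackrel{\mathrm{d}}{=}L_x+\tilde L_{(y-O_x)^+}$ only yields the \emph{opposite} inequality $\mathbb{E}[L_{x+y}^k]\le\mathbb{E}[(L_x+\tilde L_y)^k]$; as you note, that is the non-strict form of \eqref{eq:cnc1 xy}, i.e.\ a moment-level version of the stochastic-ordering remark the paper makes in Section~\ref{se:ml proofs} at the level of exponential moments — a nice observation, but the wrong direction for \eqref{eq:nc}. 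For the direction you need, you invoke the Dynkin--Lamperti (generalized arcsine) law of $O_x/x$ but carry out no computation and exhibit no mechanism by which the factor $\alpha$ emerges after summation; you yourself concede that term-by-term comparison fails and that ``the real work lies'' there. There is also a structural obstruction you do not address: a pathwise coupling can transfer monotone comparisons, but it cannot by itself manufacture a multiplicative constant $\alpha<1$ on moments; some exact quantitative input (integration against the Beta$(1-\alpha,\alpha)$ overshoot density, or an identity of fractional calculus) is indispensable and is missing. So this is a genuine gap: what you have is an equivalent reformulation plus a proof of the converse bound, not a proof of \eqref{eq:nc}.

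For comparison, the paper does not reprove Theorem~\ref{thm:nc} from scratch at all: it quotes the Hara--Hino extended binomial theorem (Theorem~\ref{thm:binom}) and observes that for $0<\alpha<1$ one has $K_\alpha=\{1\}$, $\sin\alpha\pi>0$ and $F\ge0$, whence \eqref{eq:binom} gives
\[
  \alpha \sum_{j=0}^k \binom{\alpha k}{\alpha j}\lambda^{\alpha j}
  =(1+\lambda)^{\alpha k}
   - \frac{\alpha \lambda^\alpha \sin \alpha \pi}{\pi}\int_0^1 F(t,\lambda,k)\,dt
  \le (1+\lambda)^{\alpha k},
\]
and homogeneity gives \eqref{eq:nc}. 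The sharp constant is obtained from an exact identity with a manifestly nonnegative remainder, rather than from an estimate — precisely the ingredient your plan lacks. Any exact bookkeeping along your overshoot route would in effect have to re-derive an identity of this kind (your own closing sentence anticipates this), so to complete the proof you should either carry out the arcsine-law computation in full or simply start from Theorem~\ref{thm:binom} as the paper does.
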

With a slightly weaker factor of~$\alpha^2$ instead of~$\alpha$, this result
was proven by Lyons in 1998, who also coined the term neo-classical inequality,
in a pioneering paper on rough path theory~\cite{Ly98}. Later, it has been applied by several other authors,
see e.g.~\cite{BoGeSo20,CrLiLy15,FrRi14,In12}.
Analogously to~\eqref{eq:nc to ml}, it is clear that Theorem~\ref{thm:ml} follows
from the following new binomial inequalities.
\begin{theorem}\label{thm:cnc}
  For $k\in\mathbb N$ and $0<\alpha<1,$ we have the
  converse neo-classical inequality
  \begin{equation}\label{eq:cnc1 xy}
     \sum_{j=0}^k \binom{\alpha k}{\alpha j} x^{\alpha j} y^{\alpha(k-j)} \geq (x+y)^{\alpha k},
     \quad x,y\geq0,
  \end{equation}
  and for $\alpha>1$ we have
  \begin{equation}\label{eq:cnc2 xy}
     \sum_{j=0}^k \binom{\alpha k}{\alpha j} x^{\alpha j} y^{\alpha(k-j)} \leq (x+y)^{\alpha k},
     \quad x,y\geq0.
  \end{equation}
  These inequalities are strict for $x,y>0$.
\end{theorem}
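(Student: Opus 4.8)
The plan is to exploit the homogeneity of both sides. Since every term is homogeneous of degree $\alpha k$ in $(x,y)$, I may normalize $x+y=1$ and set $x=t$, $y=1-t$ with $t\in[0,1]$; the degenerate cases $t\in\{0,1\}$ give equality and are excluded by the hypothesis $x,y>0$. Writing
\[
  f_k(t):=\sum_{j=0}^k\binom{\alpha k}{\alpha j}\,t^{\alpha j}(1-t)^{\alpha(k-j)},
\]
the two claims become $f_k(t)\geq 1$ for $0<\alpha<1$ and $f_k(t)\leq 1$ for $\alpha>1$, with strict inequality on $(0,1)$. Note that $f_k(0)=f_k(1)=1$ (only the extreme term survives at an endpoint, with $\binom{\alpha k}{0}=\binom{\alpha k}{\alpha k}=1$) and that $f_k\equiv 1$ when $\alpha=1$ by the ordinary binomial theorem, so the whole statement quantifies how $f_k$ bulges away from the constant value $1$ as $\alpha$ leaves $1$.

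The transparent special case is $k=1$: here $f_1(t)=t^\alpha+(1-t)^\alpha$, whose second derivative is $\alpha(\alpha-1)\bigl(t^{\alpha-2}+(1-t)^{\alpha-2}\bigr)$, negative for $0<\alpha<1$ and positive for $\alpha>1$. Thus $f_1$ is strictly concave (resp.\ convex), and since $f_1(0)=f_1(1)=1$ a concave function lies above, and a convex function below, the chord joining its endpoints, which is the constant $1$; this delivers both inequalities at once. My first attempt for general $k$ is therefore to prove that $f_k$ is strictly concave for $0<\alpha<1$ and strictly convex for $\alpha>1$, after which the same chord argument gives $f_k\geq 1$ (resp.\ $\leq 1$). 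Differentiating termwise yields
\[
  f_k''(t)=\sum_{j=0}^k\binom{\alpha k}{\alpha j}\left[\frac{\alpha j(\alpha j-1)}{t^2}+\frac{\alpha(k-j)(\alpha(k-j)-1)}{(1-t)^2}-\frac{2\alpha^2 j(k-j)}{t(1-t)}\right]t^{\alpha j}(1-t)^{\alpha(k-j)},
\]
and the obstacle is immediately visible: the bracketed factors change sign (the products $\alpha j(\alpha j-1)$ become positive once $\alpha j>1$), so the overall sign of $f_k''$ cannot be read off term by term and must be extracted from the arithmetic of the coefficients $\binom{\alpha k}{\alpha j}$ themselves.

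To control this I would invoke the generalized binomial theorem of Hara and Hino, whose role is precisely to expand a power $(x+y)^{\alpha k}$ in a manner compatible with the fractional coefficients $\binom{\alpha k}{\alpha j}$. The strategy is to use it to rewrite the difference
\[
  \sum_{j=0}^k\binom{\alpha k}{\alpha j}x^{\alpha j}y^{\alpha(k-j)}-(x+y)^{\alpha k}
\]
as a single explicit object — an integral or convergent series with a nonnegative kernel — carrying an overall factor that is positive exactly when $0<\alpha<1$, negative exactly when $\alpha>1$, and vanishing at $\alpha=1$ (thereby recovering the exact binomial identity). Granting such a representation, the sign of the difference is forced, strictness for $x,y>0$ follows from strict positivity of the kernel there, and the $\alpha>1$ case is the mirror image of the $\alpha<1$ case with the relevant factors $(\alpha-1)$ and $(\alpha j-1)$ reversed in sign. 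I expect the genuine difficulty to lie here: verifying that the Hara--Hino expansion of $(x+y)^{\alpha k}$, once subtracted from the neo-classical sum, collapses to an expression of one definite sign uniformly in $k$ and in $t\in(0,1)$ — equivalently, certifying the concavity/convexity of $f_k$ detected above — rather than in the bookkeeping of the reduction.
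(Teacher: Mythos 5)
There is a genuine gap: your argument stops exactly where the proof has to start, and the representation you hope to extract from Hara--Hino does not exist in the form you describe. Their identity (Theorem~\ref{thm:binom}) reads
\[
  \alpha \sum_{j=0}^k \binom{\alpha k}{\alpha j}\lambda^{\alpha j}
  =\sum_{\omega \in K_\alpha}(1+\lambda \omega)^{\alpha k}
   - \frac{\alpha \lambda^\alpha \sin \alpha \pi}{\pi}\int_0^1 F(t,\lambda,k)\,dt,
\]
with $F\geq0$ and $K_\alpha$ the set of complex $\alpha$-th roots of unity. Note the factor $\alpha$ multiplying the binomial sum. For $0<\alpha<1$ one has $K_\alpha=\{1\}$ and $\sin\alpha\pi>0$, so the identity immediately yields the \emph{neo-classical} inequality $\alpha\sum_j\binom{\alpha k}{\alpha j}\lambda^{\alpha j}\leq(1+\lambda)^{\alpha k}$ by a pure sign argument --- but the converse inequality~\eqref{eq:cnc1 xy}, which lacks the factor $\alpha$, is equivalent to the \emph{quantitative} bound
\[
  \int_0^1 F(t,\lambda,k)\,dt < \frac{\pi(1-\alpha)}{\alpha\lambda^\alpha\sin\alpha\pi}\,(1+\lambda)^{\alpha k},
\]
and no definiteness of a kernel can deliver this: one must show the integral grows strictly slower in $k$ than $(1+\lambda)^{\alpha k}$. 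The paper does so by observing that $F(t,\lambda,k)$ decreases in $k$, that the $k=0$ instance of the identity evaluates $\int_0^1 F(t,\lambda,0)\,dt$ exactly, and by splitting the integral at $1-\delta$ so the tail carries a factor $\delta^{\alpha k}$ with $\delta$ chosen (via an infimum over $k$) so that $1+\delta^{\alpha k}<(1+\lambda)^{\alpha k}$. None of this is present in, or suggested by, your proposed ``single signed representation with a factor vanishing at $\alpha=1$.''

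For $\alpha>1$ the situation is even further from the mirror image you predict. Then $|K_\alpha|$ equals $\lfloor\alpha\rfloor$ or $\lceil\alpha\rceil$, and $\sin\alpha\pi$ alternates sign with the parity of $\lfloor\alpha\rfloor$, so the integral term flips between helping and hurting as $\alpha$ crosses odd integers; the paper explicitly warns that the even-$\lfloor\alpha\rfloor$ case is much harder than the odd one, which a ``reversed sign of $(\alpha-1)$'' heuristic cannot see. Its proof proceeds by cases: trivially for integer $\alpha\geq2$ (a classical binomial sum with summands removed), by a variant of the $\delta$-trick for odd $\lfloor\alpha\rfloor$ using the crude bound $\sum_{\omega\in K_\alpha}(1+\lambda\omega)^{\alpha k}\leq\lfloor\alpha\rfloor(1+\lambda)^{\alpha k}$, and for even $\lfloor\alpha\rfloor$ by a three-step analysis: a sharpened bound $\sum_{\omega\in K_\alpha}(1+\lambda\omega)^{\alpha k}\leq\alpha(1+\lambda)^{\alpha k}$ for $\lambda\in[\tfrac12,1]$, certified by cylindrical algebraic decomposition (Mathematica's {\tt Reduce}); a monotonicity-in-$\lambda$ lemma; and small-$\lambda$ asymptotics relying on closed-form evaluations of $\int_0^\infty s^{\alpha-1}(s^{2\alpha}-2s^\alpha\cos\alpha\pi+1)^{-1}ds$ and its companion. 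Crucially, in that case~\eqref{eq:cnc2 xy} becomes \emph{asymptotically sharp} as $\lambda\downarrow0$ --- the constant terms cancel exactly and the inequality is decided at order $\lambda$ --- which rules out any soft argument via a uniformly signed kernel. Your $k=1$ concavity computation is correct but does not propagate (as you note, the termwise signs in $f_k''$ fail, and concavity of $f_k$ for general $k$ is nowhere established), so the proposal remains a plan whose central step is unproved and, as formulated, unprovable.
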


The inequalities \eqref{eq:nc}--\eqref{eq:cnc2 xy} look deceptively simple.
For instance, it is not obvious that
the proof of~\eqref{eq:cnc2 xy} -- at least with the approach used here --
is much harder when the integer $\lfloor \alpha \rfloor$ is even
than when it is odd.
Lyons's proof of (the weaker version of)~\eqref{eq:nc} applies the maximum principle for sub-parabolic
functions in a non-trivial way. Hara and Hino~\cite{HaHi10}
use fractional calculus to derive an extension of the binomial theorem (see
Theorem~\ref{thm:binom} below), which immediately implies Theorem~\ref{thm:nc}.
This extended binomal theorem will be our starting point when proving
Theorem~\ref{thm:cnc}.
We also note that in a preliminary version of~\cite{FrRi13} (available at arXiv:1104.0577v2 [math.PR]),
it was shown that a multinomial extension can be derived
from~\eqref{eq:nc}, by induction over the number of variables.

Our first theorem, Theorem~\ref{thm:ml}, says that the function
$\log E_\alpha(x^\alpha)$ is sub- resp.\ super-additive on
$\mathbb{R}^+=(0,\infty).$
In Section~\ref{se:ml proofs}, we will show stronger statements
for $0<\alpha<2$: The function $E_\alpha(x^\alpha)$ is log-concave
for $0<\alpha<1,$ and log-convex for $1<\alpha<2.$
Sections~\ref{se:prelim}--\ref{se:even}
are devoted to proving Theorem~\ref{thm:cnc}.
Some preliminaries and the plan of the proof are given
in Section~\ref{se:prelim}. In that section we also state a conjecture concerning
a converse inequality to~\eqref{eq:cnc2 xy}.

\section{Proof of Theorem~\ref{thm:ml} for $0<\alpha<2$ and
related statements}\label{se:ml proofs}

For brevity, we do not discuss strictness in this section. This would be straightforward,
and the strictness assertion in Theorem~\ref{thm:ml} will follow
anyways from Theorem~\ref{thm:cnc}.
The following easy fact is well-known; see, e.g.,~\cite{Br64}.
\begin{lemma}
  Let $f:[0,\infty) \to \mathbb R$ be convex with $f(0)=0.$ Then~$f$
  is superadditive, i.e.,
  \[
    f(x+y)\geq f(x)+f(y),\quad x,y\geq0.
  \]
\end{lemma}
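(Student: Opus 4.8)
The plan is to deduce the statement directly from the definition of convexity, exploiting the normalization $f(0)=0$. The only degenerate case is $x+y=0$, which forces $x=y=0$ and makes the asserted inequality trivially an equality; so from now on I would assume $s:=x+y>0$ and regard $f$ on the interval $[0,s]$.

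The key observation is that each of the points $x$ and $y$ lies in $[0,s]$ and can be written as a convex combination of the two endpoints $0$ and $s$. Concretely, since $x/s,\,y/s\in[0,1]$ and $x/s+y/s=1$, I would apply the defining inequality of convexity to obtain
\[
  f(x) = f\!\left(\tfrac{x}{s}\,s+\tfrac{y}{s}\,0\right)
  \leq \tfrac{x}{s}\,f(s)+\tfrac{y}{s}\,f(0)
  = \tfrac{x}{s}\,f(s),
\]
and, by the symmetric decomposition $y=\tfrac{y}{s}\,s+\tfrac{x}{s}\,0$,
\[
  f(y)\leq \tfrac{y}{s}\,f(s).
\]
Adding these two estimates and using $x+y=s$ gives
\[
  f(x)+f(y)\leq \tfrac{x+y}{s}\,f(s)=f(s)=f(x+y),
\]
which is exactly the claimed superadditivity. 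Note that $f(0)=0$ is used precisely to discard the two terms $\tfrac{y}{s}f(0)$ and $\tfrac{x}{s}f(0)$.

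There is no genuine obstacle here: the result is elementary, and the only point meriting a word of care is the exclusion of the degenerate case $s=0$ before dividing by $s$. As an aside, the same conclusion can be reached via the standard fact that for a convex $f$ the difference quotient $t\mapsto (f(t)-f(0))/t=f(t)/t$ is nondecreasing on $(0,\infty)$; this yields $f(x)\leq \tfrac{x}{s}f(s)$ and $f(y)\leq \tfrac{y}{s}f(s)$ directly, and one concludes as above. I would present the first (convex-combination) argument, since it is self-contained and does not require invoking the monotonicity of slopes.
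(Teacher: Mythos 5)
Your proof is correct and is the standard convex-combination argument: the paper itself gives no proof, merely citing the literature for this ``easy fact,'' and your argument (writing $x=\tfrac{x}{s}s+\tfrac{y}{s}\cdot 0$ and $y=\tfrac{y}{s}s+\tfrac{x}{s}\cdot 0$, applying convexity, and using $f(0)=0$) is precisely the canonical one found there. Your handling of the degenerate case $s=0$ is also correct, so nothing needs to be added.
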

Thus, the following theorem implies~\eqref{eq:ML1},
and~\eqref{eq:ML2} for $1<\alpha<2$.
\begin{theorem}\label{thm:logcc}
  For $0<\alpha<1,$ the function $x\mapsto E_\alpha(x^{\alpha})$ is
  log-concave on $\mathbb{R}^+$.
  For $1<\alpha<2,$ it is log-convex.
\end{theorem}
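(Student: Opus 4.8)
The plan is to read off log-concavity and log-convexity from the sign of the Wronskian-type quantity $W(x) := g(x)g''(x) - g'(x)^2$, where I abbreviate $g(x) := E_\alpha(x^\alpha) = \sum_{k\ge0} a_k x^{\alpha k}$ with $a_k := 1/\Gamma(\alpha k+1)$. Since $(\log g)'' = W/g^2$ and $g>0$ on $\mathbb{R}^+$, the function $g$ is log-concave exactly when $W\le0$ and log-convex exactly when $W\ge0$. As $g$ is a generalized power series with nonnegative coefficients, I would prove the pointwise sign of $W$ by expanding $W(x)=\sum_{m\ge0}c_m x^{\alpha m-2}$ and showing that each coefficient has the required sign: $c_m\le0$ for all $m$ when $0<\alpha<1$, and $c_m\ge0$ for all $m$ when $1<\alpha<2$.

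To extract the $c_m$ cleanly, I would use the Euler operator $D:=x\,\tfrac{d}{dx}$, which acts diagonally via $D\,x^{\alpha k}=\alpha k\,x^{\alpha k}$. Writing $D^2 g = x g' + x^2 g''$ gives
\[
  g\,D^2 g - (Dg)^2 = x\,g\,g' + x^2 W,
\]
while expanding the left-hand side in powers of $x^{\alpha m}$ and symmetrizing in the summation indices yields $g\,D^2g-(Dg)^2 = \tfrac{\alpha^2}{2}\sum_m T_m\,x^{\alpha m}$, where $T_m := \sum_{k=0}^m a_k a_{m-k}(2k-m)^2$. Since also $x\,g\,g' = \tfrac12\sum_m \alpha m\,S_m\,x^{\alpha m}$ with $S_m := \sum_{k=0}^m a_k a_{m-k}$ (the coefficients of $g^2$), comparing coefficients produces the key identity
\[
  c_m = \tfrac{\alpha}{2}\bigl(\alpha T_m - m\,S_m\bigr).
\]
Thus the entire theorem reduces to the single family of inequalities $\alpha T_m \le m\,S_m$ for $0<\alpha<1$ and $\alpha T_m\ge m\,S_m$ for $1<\alpha<2$, for every $m\in\mathbb{N}$.

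I would then recast this probabilistically. Let $K_m$ be the $\{0,\dots,m\}$-valued random variable with $\mathbb{P}(K_m=k)\propto a_k a_{m-k}$; this law is symmetric about $m/2$, so $\mathbb{E}[K_m]=m/2$ and $T_m/S_m=\mathbb{E}[(2K_m-m)^2]=4\,\mathrm{Var}(K_m)$. The target therefore becomes the variance inequality $4\alpha\,\mathrm{Var}(K_m)\le m$ for $0<\alpha<1$ and $\ge m$ for $1<\alpha<2$. At $\alpha=1$ one has $a_k a_{m-k}=\binom{m}{k}/m!$, so $K_m$ is $\mathrm{Binomial}(m,\tfrac12)$ with $\mathrm{Var}(K_m)=m/4$ and equality holds, consistent with $g=e^x$ being log-affine. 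I expect this variance inequality to be the main obstacle: fixing $m$ and setting $F_m(\alpha):=\alpha T_m(\alpha)-m\,S_m(\alpha)$, one has $F_m(1)=0$ and must show $F_m\le0$ on $(0,1)$ and $F_m\ge0$ on $(1,2)$. I would attack this through the log-convexity of $1/\Gamma$ and monotonicity of the digamma function, reducing each $c_m$ to an inequality among the products $\Gamma(\alpha k+1)\Gamma(\alpha(m-k)+1)$; for small $m$ this is explicit (e.g.\ for $m=2$ it is $(4\alpha-2)\Gamma(\alpha+1)^2\lessgtr\Gamma(2\alpha+1)$), but controlling all $m$ uniformly is where the real difficulty lies.

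As an alternative that sidesteps the coefficient analysis, I would pursue a direct probabilistic route. For $0<\alpha<1$ the Mainardi (M-Wright) density $M_\alpha\ge0$ satisfies $\int_0^\infty t^k M_\alpha(t)\,dt = k!/\Gamma(\alpha k+1)$, giving the mixture representation $E_\alpha(x^\alpha)=\int_0^\infty M_\alpha(t)\,e^{x^\alpha t}\,dt$; combined with known log-concavity properties of stable-type densities, this should deliver log-concavity directly. For $1<\alpha<2$ I would instead seek, via the Hankel-contour integral formula for $E_\alpha$, a representation of $E_\alpha(x^\alpha)$ as a Laplace transform $\int_0^\infty e^{xt}\,\nu(dt)$ of a \emph{positive} measure $\nu$, since any such transform is automatically log-convex. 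On this side the crux migrates to verifying the positivity of $\nu$ for $1<\alpha<2$, which I anticipate being comparable in difficulty to the $\Gamma$-function inequality above.
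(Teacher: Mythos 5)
Your proposal is a pair of reduction programs, not a proof: both branches stop exactly at the step that carries the mathematical content. In route one, the algebra is fine ($c_m=\tfrac{\alpha}{2}(\alpha T_m-mS_m)$ checks out), but the family of inequalities $\alpha T_m\lessgtr mS_m$ is left entirely open, and it is razor-thin: a Laplace-type analysis of the weights $\mathbb{P}(K_m=k)\propto 1/\bigl(\Gamma(\alpha k+1)\Gamma(\alpha(m-k)+1)\bigr)$ gives $-\partial_k^2\log$ of the weight at $k=m/2$ equal to $2\alpha^2\psi'(\alpha m/2+1)\sim 4\alpha/m$, so $\mathrm{Var}(K_m)=\tfrac{m}{4\alpha}\bigl(1+\mathrm{O}(1/m)\bigr)$ and your target $4\alpha\,\mathrm{Var}(K_m)\lessgtr m$ is decided only at the \emph{second-order} digamma asymptotics, uniformly in $m$ -- nothing in the proposal controls this, and since coefficientwise sign of $W$ is merely \emph{sufficient} for the pointwise sign, the claim could even fail for some $m$ without contradicting the theorem, in which case the whole route collapses. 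In route two, the $0<\alpha<1$ half rests on a false closure principle: $\int_0^\infty M_\alpha(t)e^{x^\alpha t}\,dt$ is an integral of log-concave functions of $x$, but log-concavity, unlike log-convexity, is \emph{not} preserved by sums or mixtures (H\"older gives closure only on the log-convex side), and the Pr\'ekopa escape hatch needs joint log-concavity in $(x,t)$, which fails here because the Hessian of $(x,t)\mapsto tx^\alpha$ has determinant $-\alpha^2x^{2\alpha-2}<0$. So ``known log-concavity of stable-type densities'' cannot ``deliver log-concavity directly''; a genuinely different device is needed.

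The paper's devices are instructive on both counts. For $0<\alpha<1$ it passes to the reciprocal -- $f$ is log-concave iff $1/f$ is log-convex -- precisely because log-convexity (indeed complete monotonicity) \emph{is} stable under the available operations: starting from $E_\alpha(x^\alpha)=e^x/\alpha-\varphi_\alpha(x)$ with $\varphi_\alpha$ completely monotone (from~\cite{TS15}), it expands $1/E_\alpha(x^\alpha)=\alpha e^{-x}\sum_{n\geq0}\bigl(\alpha e^{-x}\varphi_\alpha(x)\bigr)^n$ and concludes by closure properties that the reciprocal is completely monotone, hence log-convex. For $1<\alpha<2$ your Laplace-transform plan is essentially the paper's mechanism, but the positivity of the representing measure that you defer is the entire nontrivial input; the paper obtains it explicitly via $E_\alpha(x^\alpha)=\tfrac12\bigl(E_\beta(x^\beta)+E_\beta(-x^\beta)\bigr)$ with $\beta=\alpha/2\in(\tfrac12,1)$ and the classical integral representation of $E_\beta(-x^\beta)$, arriving at $E_\alpha(x^\alpha)=e^x\psi_\alpha(x)/\alpha$ with $\psi_\alpha$ completely monotone (equivalently, a two-sided Laplace transform of a positive measure supported on $\{1\}$ and the negative half-line, since $\sin\alpha\pi<0$ on $(1,2)$). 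To close your gap you would have to carry out this Hankel/representation computation, at which point you have reproduced the paper's proof; as it stands, neither of your two routes reaches the theorem.
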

For $\alpha>2,$ it seems that $E_\alpha(x^\alpha)$ is \emph{not} log-convex.
For instance, $E_4(x^4)=\tfrac12(\cos x+\cosh x),$ and
\[
  \big(\log E_4(x^4) \big)'' =
  \frac{2 \sin x\ \sinh x}{(\cos x+\cosh x)^2}
\]
changes sign. To prove~\eqref{eq:ML2} for $\alpha>2,$ we thus
rely on the binomial inequality~\eqref{eq:cnc2 xy}, which is proven later.
\begin{proof}[Proof of Theorem~\ref{thm:logcc}]
  We start from the representation, given in~(3.4) of~\cite{TS15},
  \[
    E_\alpha(x^{\alpha}) = \frac{e^x}{\alpha}- \varphi_\alpha (x),\quad
    x>0,\ 0<\alpha<1,
  \]
  where
  \[
    \varphi_\alpha(x):= \frac{\sin \alpha \pi}{\pi}
    \int_{0}^\infty \frac{t^{\alpha-1}e^{-xt}}
      {t^{2\alpha} -2 \cos(\alpha \pi)t^\alpha +1}\, dt
  \]
  is a completely monotone function. (Recall that a smooth function~$f$ on $\mathbb{R}^+$ is completely
  monotone if
  \[
    (-1)^n f^{(n)}(x)\geq 0,\quad n\geq0,\ x>0.)
  \]
  By well-known closure properties of completely monotone functions
  (see e.g. Corollaries~1.6 and~1.7 in~\cite{ScSoVo12}),
  \[
    \frac{1}{E_\alpha(x^{\alpha})} = \alpha e^{-x}
      \sum_{n=0}^\infty \big(\alpha e^{-x} \varphi_\alpha(x) \big)^n
  \]
  is completely monotone as well and hence log-convex. Thus, $E_\alpha(x^{\alpha})$ is log-concave.
  
  Now suppose that $1<\alpha<2.$  Setting $\beta = \alpha/2\in (1/2,1),$ we have  
\[
E_\alpha(x^\alpha)  =  \frac{E_\beta(x^\beta) + E_\beta (-x^\beta)}{2} =  \frac{e^x}{\alpha} + \frac{E_\beta(-x^\beta) - \varphi_\beta(x)}{2}\cdot\]
Inserting the well-known representation
$$E_\beta(-x^\beta) = \frac{\sin \beta \pi}{\pi}
    \int_{0}^\infty \frac{t^{\beta-1}e^{-xt}}
      {t^{2\beta} +2 \cos(\beta \pi)t^\beta +1}dt,$$
which is e.g.\ a consequence of the Perron-Stieltjes inversion formula applied to (3.7.7) in \cite{GoKiMaRo20}, and making some trigonometric simplifications, we get the representation
\[
E_\alpha(x^\alpha)  =  \frac{e^x}{\alpha} - 
\frac{\sin \alpha \pi}{\pi}
    \int_{0}^\infty \frac{t^{\alpha-1}e^{-xt}}
      {t^{2\alpha} - 2 \cos(\alpha \pi)t^\alpha +1}\,dt,
      \]
which is also given in (3.6) of \cite{TS15}. This implies     
\[
  \log(E_\alpha(x^{\alpha})) = x - \log\alpha + \log(\psi_\alpha(x))
  \]
with 
\[\psi_\alpha(x) := 1 -\frac{\alpha \sin \alpha \pi}{\pi}
    \int_{0}^\infty \frac{t^{\alpha-1}e^{-x(1+t)}}
      {t^{2\alpha} - 2 \cos(\alpha \pi)t^\alpha +1}\,dt\]
a completely monotone function. Thus, $E_\alpha(x^{\alpha})$ is log-convex.
 \end{proof}
Observe that Theorem \ref{thm:logcc} extends to the boundary case $\alpha=2,$ where
\[ E_2(x^2) = \cosh x\] 
is clearly log-convex. Let us also mention an alternative probabilistic argument for \eqref{eq:ML1} based on the $\alpha$-stable subordinator $\{ Z_t^{(\alpha)}, \; t\geq0\}$ with normalization $\mathbb{E} \big[ e^{-\lambda Z_t^{(\alpha)}}\big] = e^{-t\lambda^{\alpha}}.$ It is indeed well-known -- see e.g.\ Exercise~50.7 in \cite{Sato99} -- that  
$$E_\alpha(x^\alpha)=\mathbb{E}[e^{R^{(\alpha)}_x}],$$ 
where $R^{(\alpha)}_x := \inf\{t > 0 : Z^{(\alpha)}_t > x\}.$ Now if ${\tilde R}^{(\alpha)}_y$ is an independent copy of $R^{(\alpha)}_y,$ the Markov property implies
\begin{align*}
  R^{(\alpha)}_x + {\tilde R}^{(\alpha)}_y
  &\stackrel{\mathrm{d}}{=}\inf\big\{t > \! R^{(\alpha)}_x : Z_t^{(\alpha)} > Z_{R^{(\alpha)}_x}^{(\alpha)} +y\big\} \\ &\ge \inf\big\{t > \! R^{(\alpha)}_x : Z_t^{(\alpha)}\! > x +y\big\} \\ &= \inf\big\{t > 0 : Z_t^{(\alpha)}\! > x +y\big\} \stackrel{\mathrm{d}}{=} R^{(\alpha)}_{x+y},
\end{align*}
where the inequality follows from the obvious fact that $Z_{R^{(\alpha)}_x}^{(\alpha)} \ge x.$ 
This shows the desired inequality
$$E_\alpha(x^\alpha)E_\alpha(x^\alpha)\ge E_\alpha((x+y)^\alpha).$$
To conclude this section, we give some related results for the function
$E_\alpha(x).$
\begin{proposition}
   For $0<\alpha<1,$ the  function $x\mapsto E_\alpha(x)$ is
  log-convex on~$\mathbb{R}$. For $\alpha>1,$ it is log-concave on~$\mathbb{R}^+$.
\end{proposition}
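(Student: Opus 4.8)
The plan is to treat the two parameter ranges by completely different methods, reflecting the fact that log-convexity (a transform property) and log-concavity behave very differently under the operations at our disposal.

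For $0<\alpha<1$ I would give a purely probabilistic proof valid on all of $\mathbb{R}$. By self-similarity of the $\alpha$-stable subordinator one has $R^{(\alpha)}_x\stackrel{\mathrm{d}}{=}x^\alpha R^{(\alpha)}_1$, so the identity $E_\alpha(x^\alpha)=\mathbb{E}[e^{R^{(\alpha)}_x}]$ recalled above becomes $E_\alpha(t)=\mathbb{E}[e^{tR}]$ with $t=x^\alpha\ge 0$ and $R:=R^{(\alpha)}_1\ge0$; equivalently, $R$ is the nonnegative variable with moments $\mathbb{E}[R^k]=k!/\Gamma(\alpha k+1)$. Since $E_\alpha$ is entire, the series $\sum_k t^k\mathbb{E}[R^k]/k!$ converges absolutely, so $E_\alpha(t)=\mathbb{E}[e^{tR}]$ holds for \emph{every} real $t$, i.e.\ the moment generating function of $R$ is finite on all of $\mathbb{R}$. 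Then $\log E_\alpha$ is the cumulant generating function of $R$, and it is convex because $E_\alpha E_\alpha''-(E_\alpha')^2=\mathbb{E}[e^{tR}]\,\mathbb{E}[R^2e^{tR}]-\mathbb{E}[Re^{tR}]^2\ge0$ by Cauchy--Schwarz. This is exactly log-convexity of $E_\alpha$ on $\mathbb{R}$.

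For $\alpha>1$ this route is unavailable: a moment generating function is always log-convex, whereas now $E_\alpha$ is log-\emph{concave}, so $E_\alpha$ cannot be the exponential transform of any positive measure. Instead I would work on the level of Taylor coefficients on $\mathbb{R}^+$. Writing $a_k=1/\Gamma(\alpha k+1)$ and expanding $N:=E_\alpha E_\alpha''-(E_\alpha')^2=\tfrac12(E_\alpha^2)''-2(E_\alpha')^2=\sum_{n\ge0}c_n t^n$, a short computation gives, with $m=n+2$,
\[
 2c_n=\sum_{p+q=m}\bigl[(p-q)^2-m\bigr]a_pa_q
 = m(m-1)\sum_{p+q=m}a_pa_q-4\sum_{p+q=m}pq\,a_pa_q .
\]
Because $t^n\ge0$ and $E_\alpha(t)>0$ on $\mathbb{R}^+$, it suffices to prove $c_n\le0$ for all $n$, i.e.\ $\sum_{p+q=m}[(p-q)^2-m]a_pa_q\le0$.

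The decisive reformulation is to factor the weights through the symmetric binomial law: writing $a_pa_q=\frac{1}{m!}\binom{m}{p}\rho_p$ with $\rho_p:=\dfrac{p!\,(m-p)!}{\Gamma(\alpha p+1)\Gamma(\alpha(m-p)+1)}$ and letting $P\sim\mathrm{Bin}(m,\tfrac12)$, the claim becomes $\mathbb{E}[\rho_P(2P-m)^2]\le m\,\mathbb{E}[\rho_P]$. Since $\mathbb{E}[(2P-m)^2]=4\operatorname{Var}(P)=m$, this says precisely that $(2P-m)^2$ and $\rho_P$ are negatively correlated under the law of $P$. Now $\rho_p=\rho_{m-p}$ is symmetric about $m/2$ and $(2P-m)^2$ is increasing in $|P-m/2|$, so I would conclude by Chebyshev's correlation inequality, once I know that $\rho_p$ is \emph{decreasing} in $|p-m/2|$, i.e.\ that $(\rho_p)$ is log-concave in $p$. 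This log-concavity is the main obstacle: it reduces to the trigamma inequality $\psi'(p+1)\le\alpha^2\psi'(\alpha p+1)$ (together with its companion at $m-p$) for $\alpha>1$, which is transparent asymptotically ($\psi'(x)\sim1/x$ gives $1/p$ versus $\alpha/p$) but must be secured uniformly in $\alpha>1$ and for the small integer arguments near the centre $p=m/2$, where the margin in the correlation inequality is thinnest. For $0<\alpha<1$ the same computation with the reversed trigamma inequality would reprove log-convexity on $\mathbb{R}^+$, but the probabilistic argument already covers the whole line, so only the log-concave case genuinely requires this analysis.
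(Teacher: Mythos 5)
Your first half ($0<\alpha<1$) is correct and coincides with the paper's own treatment of that half: the paper also obtains log-convexity on all of $\mathbb{R}$ from the m.g.f.\ representation $E_\alpha(x)=\mathbb{E}[e^{xR^{(\alpha)}_1}]$ together with H\"older's inequality, of which your Cauchy--Schwarz bound $\mathbb{E}[e^{tR}]\,\mathbb{E}[R^2e^{tR}]\ge\mathbb{E}[Re^{tR}]^2$ is just the infinitesimal form; your scaling exponent $R^{(\alpha)}_x\stackrel{\mathrm{d}}{=}x^\alpha R^{(\alpha)}_1$ is the correct one.

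For $\alpha>1$ your route is genuinely different from the paper's. The paper handles both parameter ranges on $\mathbb{R}^+$ in one stroke: it writes $E_\alpha'/E_\alpha$ as a ratio of two power series with positive coefficients, observes that $n\mapsto\Gamma(1+\alpha n)/\Gamma(\alpha+\alpha n)$ is monotone by log-convexity of the gamma function, and invokes the Biernacki--Krzy{\.z} monotone-ratio lemma. Your scheme instead aims at the stronger, coefficientwise statement that every Taylor coefficient of $E_\alpha E_\alpha''-(E_\alpha')^2$ is nonpositive, and your reductions are all correct: the identity $2c_n=\sum_{p+q=m}[(p-q)^2-m]a_pa_q$, the binomial factorization $a_pa_q=\binom{m}{p}\rho_p/m!$, the computation $\mathbb{E}[(2P-m)^2]=m$, and the Chebyshev correlation step (valid since $\rho_P$ is a decreasing function and $(2P-m)^2$ an increasing function of $|2P-m|$). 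But as written the proof is incomplete at exactly the point you flag: the trigamma inequality $\psi'(p+1)\le\alpha^2\psi'(\alpha p+1)$ is asserted as the ``main obstacle'' and never proven, so the log-concavity of $(\rho_p)$ --- and with it the whole $\alpha>1$ case --- is left hanging.

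Fortunately the gap closes cleanly, and with none of the delicacy near $p=m/2$ that you anticipate: the inequality holds uniformly for all real $p\ge0$ and all $\alpha\ge1$. Set $G(y):=y^2\psi'(y+1)$. From $\psi'(y+1)=\sum_{n\ge1}(y+n)^{-2}$ and $\psi''(y+1)=-2\sum_{n\ge1}(y+n)^{-3}$ one gets
\[
  G'(y)=2y\sum_{n\ge1}\Big(\frac{1}{(y+n)^2}-\frac{y}{(y+n)^3}\Big)
  =2y\sum_{n\ge1}\frac{n}{(y+n)^3}>0,\quad y>0,
\]
so $G$ is increasing, and for $p>0,$ $\alpha\ge1$ this gives $\alpha^2\psi'(\alpha p+1)=G(\alpha p)/p^2\ge G(p)/p^2=\psi'(p+1)$, the case $p=0$ being trivial. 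Hence $p\mapsto\log\Gamma(p+1)-\log\Gamma(\alpha p+1)$ is concave on $[0,\infty)$, so $\log\rho_p$ is concave and symmetric about $m/2$, therefore unimodal with peak at the centre, and your correlation argument goes through. With this lemma appended, your proof is complete; it costs more work than the paper's three-line Biernacki--Krzy{\.z} argument, but it buys a coefficientwise strengthening of log-concavity that the monotone-ratio lemma does not give (and, with the reversed trigamma inequality, an alternative proof of log-convexity on $\mathbb{R}^+$ for $0<\alpha<1$).
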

\begin{proof}
The logarithmic derivative of $E_\alpha(x)$ is the ratio of series
$$\frac{E_\alpha'(x)}{E_\alpha(x)} = \frac{{\displaystyle \sum_{n\ge 0} \frac{x^n}{\Gamma(\alpha +\alpha n)}}}{{\displaystyle\sum_{n\ge 0} \frac{\alpha \, x^n}{\Gamma(1+\alpha n)}}},$$
and it is clear by log-convexity of the gamma function that the sequence
$$n \mapsto \frac{\Gamma(1+\alpha n)}{\Gamma (\alpha + \alpha n)}$$
is increasing for $\alpha < 1$ and decreasing for $\alpha > 1.$ By Biernacki and Krzy{\. z}'s lemma (see \cite{BP55}), this shows that
$$x\mapsto \frac{E_\alpha'(x)}{E_\alpha(x)}$$
is non-decreasing on $\mathbb{R}^+$ for $\alpha < 1$ and non-increasing on $\mathbb{R}^+$ for $\alpha > 1.$ Finally, the log-convexity of $E_\alpha(x)$ on the whole real line for $0 < \alpha < 1$ is a consequence of H\"older's inequality and the classic m.g.f. representation
$$E_\alpha(x)=\mathbb{E}[e^{x R^{(\alpha)}_1}],$$
where we have used the above notation and the easily established self-similar identity $ R^{(\alpha)}_x \stackrel{\mathrm{d}}{=} x R^{(\alpha)}_1.$
\end{proof}

For $\alpha\ge 2,$ there is actually a stronger result. It has been shown by Wiman~\cite{Wi05} that the zeros of the Mittag-Leffler function are real and negative for $\alpha \ge 2.$ As this function is of order $1/\alpha<1,$ the Hadamard factorization theorem (Theorem XI.3.4 in~\cite{Co78}) implies that
  \[
    \frac{1}{E_\alpha(x)} = \prod_{n=1}^\infty\Big(1+\frac{x}{x_{n,\alpha}}\Big)^{-1},
  \]
  where
  \[
    0 < x_{1,\alpha} \leq x_{2,\alpha} \leq \cdots
  \]
  are the absolute values of the zeros of $E_\alpha(x).$ We conclude that $1/E_\alpha(x)$ is completely monotone, and thus log-convex, on ${\mathbb R}^+.$ An interesting open question is whether $1/E_\alpha(x)$ remains completely monotone on ${\mathbb R}^+$ for $1 < \alpha < 2.$ Unfortunately, the above argument fails because the large zeroes of $E_\alpha(x)$ have non-trivial imaginary part (see Proposition 3.13 in \cite{GoKiMaRo20}). 

\section{Proof of Theorem~\ref{thm:cnc}: preliminaries}\label{se:prelim}

By symmetry and scaling, it is clearly sufficient to prove
\begin{equation}\label{eq:cnc1}
  \sum_{j=0}^k \binom{\alpha k}{\alpha j} \lambda^{\alpha j}>(1+\lambda)^{\alpha k},
  \quad \alpha \in (0,1),\ \lambda \in (0,1],\ k\in\mathbb N,
\end{equation}
and
\begin{equation}\label{eq:cnc2}
  \sum_{j=0}^k \binom{\alpha k}{\alpha j} \lambda^{\alpha j}<(1+\lambda)^{\alpha k},
  \quad \alpha >1,\ \lambda \in (0,1],\ k\in\mathbb N.
\end{equation}
As a sanity check, we verify these statements for $\lambda>0$ sufficiently small:
\[
  1 + \binom{\alpha k}{\alpha}\lambda^{\alpha} + \mathrm{O}(\lambda^{2\alpha})
  > 1 +\alpha k \lambda + \mathrm{O}(\lambda^2), \quad \alpha \in (0,1),\ k\in\mathbb N,
\]
and
\[
  1 +  \mathrm{O}(\lambda^{\alpha})
  < 1 +\alpha k \lambda + \mathrm{O}(\lambda^2), \quad \alpha >1,\ k\in\mathbb N.
\]
These inequalities are obviously correct for small $\lambda>0,$ the first
one by $\binom{\alpha k}{\alpha}>0.$
We now recall a remarkable generalization of the binomial
theorem, due to Hara and Hino~\cite{HaHi10} . Their proof, using fractional Taylor
series, builds on earlier work by Osler~\cite{Os71}.
Following~\cite{HaHi10}, for $\alpha>0$ define
\begin{align}
   K_\alpha &:= \{ \omega \in \mathbb C : \omega^\alpha =1 \} \notag \\
   &= \{ e^{i\theta} : -\pi<\theta\leq \pi,\ e^{i\theta \alpha}=1 \} \notag \\
   &=\Big\{ \exp\Big(\frac{2k\pi i}{\alpha}\Big) : k\in\mathbb{Z},\
    -\frac{\alpha}{2}< k \leq \frac{\alpha}{2} \Big\}. \label{eq:K exp}
\end{align}
For $t,\lambda\in(0,1]$ and $k\in\mathbb N,$ define
\begin{equation}\label{eq:def F}
  F(t,\lambda,k):=  t^{\alpha-1}(1-t)^{\alpha k}
  \bigg( \frac{1}{|t^\alpha-\lambda^\alpha e^{-i\alpha \pi}|^2}
  + \frac{\lambda^{\alpha k}}{|e^{-i\alpha \pi}-(\lambda t)^\alpha |^2}
  \bigg).
\end{equation}
\begin{theorem}[Theorem~3.2 in~\cite{HaHi10}]\label{thm:binom}
  Let $\alpha>0,$ $\lambda\in(0,1]$ and $k\in\mathbb{N}_0.$ Then we have
  \begin{equation}\label{eq:binom}
    \alpha \sum_{j=0}^k \binom{\alpha k}{\alpha j}\lambda^{\alpha j}
    =\sum_{\omega \in K_\alpha}(1+\lambda \omega)^{\alpha k}
     - \frac{\alpha \lambda^\alpha \sin \alpha \pi}{\pi}
    \int_0^1 F(t,\lambda,k) dt.
  \end{equation}
\end{theorem}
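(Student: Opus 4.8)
The plan is to derive \eqref{eq:binom} from an exact contour-integral representation of the left-hand side, and then to evaluate that representation by separating a polar contribution (which yields the finite sum over $K_\alpha$) from a branch-cut contribution (which yields the $F$-integral). The starting point is elementary. Writing $\binom{\alpha k}{\alpha j}=\Gamma(\alpha k+1)/(\Gamma(\alpha j+1)\Gamma(\alpha(k-j)+1))$ and inserting the Hankel representation $1/\Gamma(s)=(2\pi i)^{-1}\int_{\mathcal H}e^{w}w^{-s}\,dw$ for each of the two gamma factors in the denominator, the finite geometric series in $j$ sums in closed form. With $\rho:=(\lambda v/u)^{\alpha}$ this gives the exact identity
\[
\alpha\sum_{j=0}^{k}\binom{\alpha k}{\alpha j}\lambda^{\alpha j}
=\frac{\alpha\,\Gamma(\alpha k+1)}{(2\pi i)^{2}}\int_{\mathcal H}\!\int_{\mathcal H}
\frac{e^{u+v}}{u\,v^{\alpha k+1}}\,\frac{1-\rho^{k+1}}{1-\rho}\,du\,dv,
\]
where $\mathcal H$ is the standard Hankel contour. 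This reduces everything to a double contour integral whose only transcendental ingredient is the multivalued factor $\rho=(\lambda v/u)^{\alpha}$.

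The second step is to evaluate this integral by collapsing the Hankel contours, along which $\rho$ acquires the boundary values $q\,e^{\pm i\alpha\pi}$ with $q>0$ real. Two features then appear simultaneously. First, since the combined kernel $(1-\rho^{k+1})/(1-\rho)$ is a polynomial and hence pole-free, one splits it into the two meromorphic pieces $1/(1-\rho)$ and $\rho^{k+1}/(1-\rho)$, whose contours are fixed by convergence at $u,v\in\{0,\infty\}$; because these lie on opposite sides of the polar locus $\rho=1$, the poles there, located at $\lambda v/u=\omega$ for $\omega\in K_\alpha$, are enclosed with opposite orientations and contribute additively rather than cancelling. A direct computation with the Hankel representation of $1/\Gamma$ identifies each such residue with $(1+\lambda\omega)^{\alpha k}$ (using the conjugation symmetry of $K_\alpha$ to pass from $1/\omega$ to $\omega$), so that together they assemble into $\sum_{\omega\in K_\alpha}(1+\lambda\omega)^{\alpha k}$. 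Second, the difference of the two boundary values of $1/(1-\rho)$ is
\[
\frac{1}{1-q e^{i\alpha\pi}}-\frac{1}{1-q e^{-i\alpha\pi}}
=\frac{2i\,q\sin\alpha\pi}{\lvert 1-q e^{i\alpha\pi}\rvert^{2}},
\]
which produces both the prefactor $\sin\alpha\pi/\pi$ and the modulus-squared denominators of $F$ in \eqref{eq:def F}; the two numerator pieces $1$ and $\rho^{k+1}$ feed the two summands of $F$, the second carrying the extra factor $\lambda^{\alpha(k+1)}=\lambda^{\alpha}\cdot\lambda^{\alpha k}$ that matches its $\lambda^{\alpha k}$-weight. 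The Beta-type factor $t^{\alpha-1}(1-t)^{\alpha k}$ is what remains of the elementary weight $e^{u+v}/(u\,v^{\alpha k+1})$ after the residual reciprocal-gamma integration.

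The main obstacle is the rigorous bookkeeping of this deformation. One must justify the interchange of the finite sum with the double Hankel integration, control convergence at the branch points $u,v\in\{0,\infty\}$, and — most delicately — verify that, after the change of variables bringing the cut integral to the interval $t\in(0,1]$, the phases $e^{\pm i\alpha\pi}$ recombine into exactly the denominators $\lvert t^{\alpha}-\lambda^{\alpha}e^{-i\alpha\pi}\rvert^{2}$ and $\lvert e^{-i\alpha\pi}-(\lambda t)^{\alpha}\rvert^{2}$ of \eqref{eq:def F}, with no spurious terms. A further point requiring separate care is the position of the points of $K_\alpha$ relative to the polar locus $\rho=1$: when $-1\in K_\alpha$, i.e. when $\alpha$ is an even integer, a pole falls on the collapsed contour, and the argument must then be completed by a limiting or principal-value passage. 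An equivalent but analytically heavier route, which makes the link to Section~\ref{se:ml proofs} transparent, is to start instead from the single Cauchy-coefficient identity $\alpha\sum_{j}\binom{\alpha k}{\alpha j}\lambda^{\alpha j}=\alpha\Gamma(\alpha k+1)\,[z^{k}]\{E_\alpha(z)E_\alpha(\lambda^{\alpha}z)\}$ and to insert the integral representation of $E_\alpha$ used there; the exponential part $\alpha^{-1}e^{z^{1/\alpha}}$ then produces the $K_\alpha$-sum through the several branches of $z^{1/\alpha}$, while the completely monotone part produces the $F$-integral, at the cost of controlling the growth of $E_\alpha$ at infinity.
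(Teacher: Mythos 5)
First, a point of orientation: the paper itself contains \emph{no} proof of Theorem~\ref{thm:binom}; the result is imported verbatim from~\cite{HaHi10}, where it is established via fractional Taylor expansions building on Osler~\cite{Os71}, the authors' only addition being the remark that the case $k=0$ follows from the same argument. Your double-Hankel route is therefore a genuinely different approach from the cited source, and the fragments you actually compute check out: the double contour identity with kernel $(1-\rho^{k+1})/(1-\rho)$, $\rho=\lambda^{\alpha}v^{\alpha}/u^{\alpha}$ in principal branches, is a correct identity (the $j$-sum is finite, so the interchange is harmless); the residue along a locus $\lambda v/u=\omega$ does evaluate, after the remaining Hankel integral in $v$, to $(1+\lambda\omega^{-1})^{\alpha k}$, and closedness of $K_\alpha$ under conjugation lets you replace $\omega^{-1}$ by $\omega$; and the discontinuity $\frac{1}{1-qe^{i\alpha\pi}}-\frac{1}{1-qe^{-i\alpha\pi}}=\frac{2iq\sin\alpha\pi}{|1-qe^{i\alpha\pi}|^{2}}$ has exactly the shape of the denominators in~\eqref{eq:def F}.

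However, the two steps in which the content of the theorem actually resides are asserted rather than proved, and they are genuine gaps, not routine bookkeeping. (i) \emph{Pole counting.} With branches attached to $u$ and $v$ separately, the locus $\rho=1$ is given by $\lambda|v|=|u|$ together with $\arg v-\arg u=2\pi n/\alpha$; since $\arg v-\arg u$ ranges over $(-2\pi,2\pi)$ on the two Hankel contours, this admits all integers with $|n|<\alpha$, which for $\alpha>1$ is strictly more than the index set $-\alpha/2<n\leq\alpha/2$ parametrizing $K_\alpha$ in~\eqref{eq:K exp}: a point $\omega=e^{2\pi in/\alpha}$ with $\alpha/2<|n|<\alpha$ satisfies $\lambda^{\alpha}v^{\alpha}=u^{\alpha}$ along such a locus even though $\omega\notin K_\alpha$. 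Your deformation argument must show that exactly the $K_\alpha$-loci are swept, once each and with the claimed orientations, and none of the spurious ones; that is precisely where the sum $\sum_{\omega\in K_\alpha}$ with unit coefficients comes from, and it is missing. (ii) \emph{The cut contribution.} The reduction of the two-variable discontinuity integral to the one-dimensional $\int_0^1 F(t,\lambda,k)\,dt$ --- the emergence of the weight $t^{\alpha-1}(1-t)^{\alpha k}$, of the specific moduli $|t^{\alpha}-\lambda^{\alpha}e^{-i\alpha\pi}|^{2}$ and $|e^{-i\alpha\pi}-(\lambda t)^{\alpha}|^{2}$, and the truncation of the range to $t\in(0,1]$ --- is covered by one descriptive sentence, yet it requires an explicit change of variables and a radial integration, and it is the step most prone to generating extra terms; you concede as much yourself (``the main obstacle''), so as it stands the proposal is a program, not a proof. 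Two smaller remarks: for $\alpha\in\mathbb N$ (in particular even $\alpha$, where a pole sits on the collapsed contour but $\sin\alpha\pi=0$) the identity follows instantly from the classical binomial theorem and the roots-of-unity filter, so the principal-value passage can be avoided entirely; and your alternative route through $\alpha\Gamma(\alpha k+1)[z^{k}]\{E_\alpha(z)E_\alpha(\lambda^{\alpha}z)\}$ cannot rely on the representations of Section~\ref{se:ml proofs}, which are confined to $0<\alpha<2$ --- for general $\alpha>0$ one needs the full expansion of $E_\alpha$ with $\lfloor\alpha\rfloor$ resp.\ $\lceil\alpha\rceil$ exponential terms, which amounts to already knowing the $K_\alpha$-structure you are trying to derive.
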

In~\cite{HaHi10}, the theorem was stated for $k\in\mathbb N$, but it is not
hard to see that the proof also works for $k=0.$
Clearly, the classical binomial theorem is recovered from~\eqref{eq:binom}
by putting $\alpha=1.$
As noted in~\cite{HaHi10}, for $\alpha\in(0,1)$ we have $K_\alpha=\{1\},$
and thus Theorem~\ref{thm:nc} is an immediate consequence of Theorem~\ref{thm:binom}.
Hara and Hino also mention that Theorem~\ref{thm:binom} implies
 \begin{equation*}
     \alpha \sum_{j=0}^k \binom{\alpha k}{\alpha j} x^{\alpha j} y^{\alpha(k-j)} \geq (x+y)^{\alpha k},
     \quad x,y\geq0,\  \alpha\in(1,2],
  \end{equation*}
  a partial converse to~\eqref{eq:cnc2 xy}. It seems that this inequality
  does not hold for $\alpha>2.$ 
   We leave it to future research to find
  an appropriate inequality comparing the binomial sum
  with $(x+y)^{\alpha k}$ for $\alpha>2.$ Different methods than in the subsequent
  sections will be required, as a \emph{lower} estimate
  for $\sum_{\omega \in K_\alpha}(1+\lambda \omega)^{\alpha k}$
  is needed. The following statement might be true:
   \begin{conjecture}
    For $k\in\mathbb N$ and $\alpha>2,$ we have
    \[
       2^{\alpha-1} \sum_{j=0}^k \binom{\alpha k}{\alpha j} x^{\alpha j}    
              y^{\alpha(k-j)} \geq (x+y)^{\alpha k},
     \quad x,y\geq0.
     \]
  \end{conjecture}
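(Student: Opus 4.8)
The plan is to follow the same route as for Theorem~\ref{thm:cnc} -- reduce to one variable and invoke the extended binomial theorem -- but now extract a \emph{lower} bound for the root-of-unity sum rather than an upper one. First I would reduce to a single variable exactly as in the passage to \eqref{eq:cnc1}--\eqref{eq:cnc2}: since the binomial sum is symmetric in $x,y$ and the claimed inequality is homogeneous of degree $\alpha k$, it suffices to prove
\[
  2^{\alpha-1}\sum_{j=0}^k\binom{\alpha k}{\alpha j}\lambda^{\alpha j}\geq(1+\lambda)^{\alpha k},\qquad \lambda\in(0,1],\ k\in\mathbb N.
\]
Before estimating, I would record that the constant $2^{\alpha-1}$ is sharp and locate the equality: for $k=1$ the left-hand side is $2^{\alpha-1}(1+\lambda^\alpha)$, and the inequality is precisely the convexity estimate $(1+\lambda)^\alpha\leq 2^{\alpha-1}(1+\lambda^\alpha)$ for $t\mapsto t^\alpha$, with equality at $\lambda=1$. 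This both explains the factor $2^{\alpha-1}$ and shows that any proof must be tight at $(k,\lambda)=(1,1)$, so that no crude term-by-term bound can succeed uniformly in $k$.

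Next I would apply Theorem~\ref{thm:binom}. Writing $S:=\sum_{\omega\in K_\alpha}(1+\lambda\omega)^{\alpha k}$ and $I:=\frac{\alpha\lambda^\alpha\sin\alpha\pi}{\pi}\int_0^1 F(t,\lambda,k)\,dt$, identity~\eqref{eq:binom} turns the target into
\[
  S-I\ \geq\ \alpha\,2^{1-\alpha}(1+\lambda)^{\alpha k}.
\]
Since $\omega=1\in K_\alpha$ contributes exactly $(1+\lambda)^{\alpha k}$ and the remaining $\omega$ occur in conjugate pairs, I would isolate the main term and write
\[
  S=(1+\lambda)^{\alpha k}+2\!\!\sum_{\omega\in K_\alpha,\ \Im\omega>0}\!\! r_\omega^{\alpha k}\cos(\alpha k\,\phi_\omega),\qquad r_\omega:=|1+\lambda\omega|,\ \ \phi_\omega:=\arg(1+\lambda\omega),
\]
where $r_\omega=\sqrt{1+2\lambda\cos(\arg\omega)+\lambda^2}<1+\lambda$ for $\omega\neq1$ and $\phi_\omega\in(-\tfrac\pi2,\tfrac\pi2)$ since $\operatorname{Re}(1+\lambda\omega)\geq1-\lambda\geq0$. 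The sign of $I$ is that of $\sin\alpha\pi$, and here the even/odd dichotomy reappears: when $\lfloor\alpha\rfloor$ is odd we have $\sin\alpha\pi<0$, hence $-I\geq0$, and it is enough to bound $S$ from below, whereas when $\lfloor\alpha\rfloor$ is even we have $I\geq0$ and must in addition control the integral term from above, the genuinely harder regime. As a consistency check, at $\alpha=2$ one has $K_2=\{\pm1\}$, $I=0$ and $S=(1+\lambda)^{2k}+(1-\lambda)^{2k}$, so the required bound reduces to $(1-\lambda)^{2k}\geq0$, sharp at $\lambda=1$ and matching the equality case identified above.

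The core of the argument, and the main obstacle, is the lower bound for the oscillating conjugate-pair terms $2\sum_{\Im\omega>0}r_\omega^{\alpha k}\cos(\alpha k\phi_\omega)$, uniformly in $k\geq1$ and $\lambda\in(0,1]$. Because $r_\omega<1+\lambda$ strictly, each such term is suppressed by the factor $(r_\omega/(1+\lambda))^{\alpha k}$ relative to the main term, so for large $k$ a crude bound $|\cos|\leq1$ suffices; the difficulty concentrates at small $k$ -- where, as noted, the estimate is sharp at $k=1$ -- and, as $\alpha\to\infty$, at the roots of unity whose argument is close to $0$, for which the suppression degrades and $|K_\alpha|$ grows. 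I expect the decisive step to require pairing the negative excursions of $\cos(\alpha k\phi_\omega)$ against the main term and, in the even-$\lfloor\alpha\rfloor$ case, against the integral term $I$, in the spirit of the sign analysis used for~\eqref{eq:cnc2}. Producing such a bound that is simultaneously uniform in $k$, $\lambda$ and $\alpha>2$ is the point I expect to be hardest, and is presumably why the statement is recorded only as a conjecture.
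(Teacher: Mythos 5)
There is nothing in the paper to match your proposal against: this statement is recorded only as a \emph{conjecture}, and the authors explicitly leave it to future research, noting precisely what you rediscovered --- that a \emph{lower} estimate for $\sum_{\omega\in K_\alpha}(1+\lambda\omega)^{\alpha k}$ would be needed and that different methods from Sections~\ref{se:a01}--\ref{se:even} will be required. Your preparatory steps are correct and consistent with the paper's remarks: the symmetry/homogeneity reduction to $\lambda\in(0,1]$; the identification of the $k=1$ case with the convexity bound $(1+\lambda)^\alpha\leq 2^{\alpha-1}(1+\lambda^\alpha)$, with equality at $\lambda=1$ (exactly the paper's sharpness observation following the conjecture); the reformulation via Theorem~\ref{thm:binom} of the target as $S-I\geq \alpha\,2^{1-\alpha}(1+\lambda)^{\alpha k}$; and the sign dichotomy of $I$ according to the parity of $\lfloor\alpha\rfloor$. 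But these are reformulations, not progress: the entire mathematical content lies in the uniform lower bound on the oscillating part $2\sum_{\Im\omega>0}r_\omega^{\alpha k}\cos(\alpha k\,\phi_\omega)$ (together with an upper bound on $I$ when $\lfloor\alpha\rfloor$ is even), and you state this step as an expectation rather than carry it out --- as you yourself concede in the final sentence. Note how constrained that missing step is: since $\alpha\,2^{1-\alpha}\leq 1$ for $\alpha\geq 2$, the main term $\omega=1$ leaves a slack of $(1-\alpha\,2^{1-\alpha})(1+\lambda)^{\alpha k}$, and your own $k=1$, $\lambda=1$ computation shows this slack is \emph{exactly} consumed there; any admissible estimate must therefore be identity-sharp at that point while remaining uniform in $k$, $\lambda$, and the growing set $K_\alpha$. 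So the verdict is a genuine gap --- indeed the proposal is a research plan whose decisive inequality is absent, which is the honest state of the art.

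Two secondary inaccuracies in the plan itself. First, for even integer $\alpha$ the element $\omega=-1\in K_\alpha$ is real and self-conjugate, so your decomposition of $S$ into the $\omega=1$ term plus conjugate pairs with $\Im\omega>0$ drops a term; integer $\alpha$ should be treated separately (but unlike the situation for~\eqref{eq:cnc2}, where the classical binomial theorem gives the upper bound instantly, even the integer case of this conjecture is not immediate, since one now needs a lower bound on a lacunary binomial sum). Second, the claim that ``for large $k$ a crude bound $|\cos|\leq1$ suffices'' is not uniform in $\lambda$: writing $q_\omega:=r_\omega/(1+\lambda)$, one has $q_\omega^2=1-2\lambda(1-\cos\arg\omega)/(1+\lambda)^2\to1$ as $\lambda\downarrow0$, so the threshold $k_0(\lambda,\alpha)$ beyond which the crude bound wins blows up as $\lambda\downarrow0$; the difficulty is therefore not confined to small $k$ but lives in a joint small-$\lambda$, moderate-$k$ regime (compare the $\tilde\delta$- and $\hat\delta$-type uniformization devices the paper needs in Sections~\ref{se:a01} and~\ref{se:odd} to handle exactly this kind of non-uniformity). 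Any future attack on the conjecture along your lines would have to resolve both points.
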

  The factor $2^{\alpha-1}=\sup_{0<\lambda\leq1}(1+\lambda)^\alpha/(1+\lambda^\alpha)$ would be sharp for $k=1,$ $x=\lambda\in(0,1],$ $y=1.$
    In the following sections,
we will use arguments based on Theorem~\ref{thm:binom} to prove~\eqref{eq:cnc1} and~\eqref{eq:cnc2},
which imply Theorem~\ref{thm:cnc}, from which Theorem~\ref{thm:ml} follows.
The proof of~\eqref{eq:cnc1} is presented in Section~\ref{se:a01}. It profits from
the fact that
\[
  \sum_{\omega \in K_\alpha}(1+\lambda \omega)^{\alpha k} = (1+\lambda)^{\alpha k},
  \quad \alpha \in (0,1),
\]
and requires only obvious properties of the function~$F$
defined in~\eqref{eq:def F}.
At the beginning of Section~\ref{se:odd}, we show that, for
$2\leq\alpha\in\mathbb N,$ the inequality~\eqref{eq:cnc2} immediately
follows from the classical binomial theorem. We then continue with the case
where $\alpha>1$ is not an integer, and $\lfloor \alpha \rfloor$ is odd.
Then, the set $K_\alpha$ has $\lfloor \alpha \rfloor$ elements,
and the crude estimate
\[
  \sum_{\omega \in K_\alpha}(1+\lambda \omega)^{\alpha k} \leq
  \lfloor \alpha \rfloor (1+\lambda)^{\alpha k}
\]
suffices to show~\eqref{eq:cnc2}, again using only simple properties of~$F.$
The case where $\lfloor \alpha \rfloor$ is even is more involved,
and is handled in Section~\ref{se:even}.
In this case, $|K_\alpha|=\lceil \alpha \rceil$, and the obvious estimate
\begin{equation}\label{eq:too weak}
  \sum_{\omega \in K_\alpha}(1+\lambda \omega)^{\alpha k} \leq
  \lceil \alpha \rceil (1+\lambda)^{\alpha k}
\end{equation}
is too weak to lead anywhere. We first show (Lemma~\ref{le:big la})
that, for $\lambda\in[\tfrac12,1],$ the right hand side of~\eqref{eq:too weak}
can be strengthened to $\alpha  (1+\lambda)^{\alpha k},$
and that~\eqref{eq:cnc2} easily follows from this for these values of~$\lambda.$
For smaller~$\lambda>0,$ more precise estimates for the sum
$\sum_{\omega \in K_\alpha}(1+\lambda \omega)^{\alpha k}$
and the integral in~\eqref{eq:binom} are needed, which are developed
in the remainder of Section~\ref{se:even}.

\section{Proof of Theorem~\ref{thm:cnc} for $0<\alpha<1$}\label{se:a01}

As mentioned above, it suffices to prove~\eqref{eq:cnc1}.
For $\alpha\in(0,1),$ we have $K_\alpha=\{1\}.$ Since $\sin \alpha \pi>0,$ we
see from~\eqref{eq:binom} that the desired inequality~\eqref{eq:cnc1} is equivalent to
\begin{equation}\label{eq:a1}
  \int_0^1 F(t,\lambda,k)dt< G(\lambda,k),\quad \lambda\in(0,1],\ k\in\mathbb N,
\end{equation}
where
\[
  G(\lambda,k) := \frac{\pi(1-\alpha)}{\alpha \lambda^\alpha \sin \alpha \pi}
    (1+\lambda)^{\alpha k}.
\]
Define
\[
  \tilde{\delta} :=  \inf_{k\in\mathbb N}\Big((1+\lambda)^{\alpha k}-1)\Big)^{\frac{1}{\alpha k}}
  =(1+\lambda) \inf_{k\in\mathbb N}\Big(1-(1+\lambda)^{-\alpha k}\Big)^{\frac{1}{\alpha k}}>0.
\]
This number is positive, as it is defined by the infimum of a sequence of positive numbers
converging to a positive limit.
Moreover, we define
\[
  \delta := \tfrac12 \min\{1,\tilde{\delta}\}>0.
\]
By definition, $F(t,\lambda,k)$ decreases w.r.t.~$k$, and so
\[
 F(t,\lambda,k) \leq F(t,\lambda,0),\quad t,\lambda \in(0,1],\ k \in \mathbb N.
\]
Applying Theorem~\ref{thm:binom} with $k=0$ yields
\begin{equation*}
  \int_0^1 F(t,\lambda,0)dt = G(\lambda,0), \quad \lambda\in(0,1].
\end{equation*}
By these two observations,
\begin{equation}\label{eq:delta1}
  \int_0^{1-\delta}F(t,\lambda,k)dt \leq \int_0^{1-\delta}F(t,\lambda,0)dt
  \leq G(\lambda,0).
\end{equation}
Since $\lambda^{\alpha k}\leq 1,$ and
$(1-t)^{\alpha k}$ decreases w.r.t.~$t$, it is clear from the definition of~$F$ that
\begin{equation}\label{eq:delta2}
  \int_{1-\delta}^1  F(t,\lambda,k)dt
  \leq \delta^{\alpha k} \int_0^1 F(t,\lambda,0)dt = \delta^{\alpha k}G(\lambda,0).
\end{equation}
By definition of~$\delta$, we have
\begin{equation}\label{eq:delta lambda}
  1+\delta^{\alpha k}<(1+\lambda)^{\alpha k}, \quad
  \lambda\in(0,1],\ k\in\mathbb N.
\end{equation}
Now note that
\begin{equation}\label{eq:delta lambda2}
  \int_0^{1}F(t,\lambda,k)dt \leq (1+\delta^{\alpha k})G(\lambda,0) < G(\lambda,k),
\end{equation}
where the first estimate follows from~\eqref{eq:delta1} and~\eqref{eq:delta2},
and the second one from~\eqref{eq:delta lambda}. Thus, \eqref{eq:a1} is established.

\section{Proof of Theorem~\ref{thm:cnc} for $\alpha\in\mathbb N$
or $\lfloor\alpha\rfloor$ odd}\label{se:odd}

If $2\leq \alpha\in\mathbb N$ is an integer, then the proof of~\eqref{eq:cnc2} is very easy,
as we are  dealing with a classical binomial sum with some summands removed:
\[
  \sum_{j=0}^k \binom{\alpha k}{\alpha j} \lambda^{\alpha j}
  < \sum_{j=0}^{\alpha k} \binom{\alpha k}{ j} \lambda^{j}=(1+\lambda)^{\alpha k}.
\]
In the remainder of this section, we prove~\eqref{eq:cnc2}
for $1<\alpha\notin \mathbb N$ with $\lfloor\alpha\rfloor$ odd.
Our approach is similar to the preceding section.
First, observe that
\begin{align*}
    |K_\alpha| = \Big \lfloor \frac{\alpha}{2} \Big \rfloor - 
        \Big \lfloor {-\frac{\alpha}{2}} \Big \rfloor 
       = \lfloor \alpha \rfloor,
\end{align*}
and that
\begin{align*}
  \alpha(1+\lambda)^{\alpha k}&- \sum_{\omega \in K_\alpha}(1+\lambda \omega)^{\alpha k}\\
  &\geq  \alpha(1+\lambda)^{\alpha k}- \lfloor \alpha \rfloor (1+\lambda)^{\alpha k}
  > \alpha -  \lfloor \alpha \rfloor.
\end{align*}
Therefore, the sequence
\[
  A_k:=\bigg( \frac{\alpha(1+\lambda)^{\alpha k} 
            - \sum_{\omega \in K_\alpha}(1+\lambda \omega)^{\alpha k}}
            {\alpha -  \lfloor \alpha \rfloor}
        -1 \bigg)^\frac{1}{\alpha k}, \quad k\in\mathbb N,
\]
is well-defined and positive, and
\begin{align*}
  \lim_{k\to\infty} A_k &=
  (1+\lambda) \lim_{k\to\infty}(\alpha-1)^{\frac{1}{\alpha k}} \\
  &\qquad\qquad\times \bigg(
     \frac{1
            - \sum_{\omega \in K_\alpha\setminus\{1\}}
               \frac{(1+\lambda \omega)^{\alpha k}}{(\alpha-1)(1+\lambda)^{\alpha k}}}
            {\alpha -  \lfloor \alpha \rfloor}-\frac{1}{(\alpha-1)(1+\lambda)^{\alpha k}}
  \bigg)^\frac{1}{\alpha k}\\
   &=1+\lambda>0.
\end{align*}
We can thus define the positive number
\[
  \hat{\delta} := \frac12\min\Big\{1, \inf_{k\in\mathbb N} A_k\Big\}>0.
\]
Since odd $\lfloor \alpha \rfloor$ implies $\sin \alpha \pi <0$
for $\alpha\notin\mathbb N,$ it is clear from~\eqref{eq:binom}
that~\eqref{eq:cnc2} is equivalent to
\begin{equation}\label{goal}
  \int_0^1 F(t,\lambda,k)dt < \hat{G}(\lambda,k),\quad \lambda\in(0,1],\ k\in\mathbb N,
\end{equation}
where
\[
  \hat{G}(\lambda,k):=
  -\frac{\pi}{\alpha \lambda^\alpha \sin \alpha \pi}\Big(
    \alpha(1+\lambda)^{\alpha k}- \sum_{\omega \in K_\alpha}(1+\lambda \omega)^{\alpha k}
  \Big).
\]
By the same argument that gave us the first inequality
in~\eqref{eq:delta lambda2}, we find
\[
  \int_0^1 F(t,\lambda,k)dt \leq (1+\hat{\delta}^{\alpha k})\hat{G}(\lambda,0),
  \qquad \lambda\in(0,1],\ k\in\mathbb N.
\]
Note that
\[
  \hat{G}(\lambda,0)=
  -\frac{\pi}{\alpha \lambda^\alpha \sin \alpha \pi}(
    \alpha- \lfloor \alpha \rfloor
  ).
\]
The proof of~\eqref{eq:cnc2} with $\lfloor \alpha \rfloor$ odd will thus be finished
if we can show that
\[
   (1+\hat{\delta}^{\alpha k})\hat{G}(\lambda,0) < \hat{G}(\lambda,k),\quad
   \lambda\in(0,1],\ k\in\mathbb N.
\]
But this is equivalent to
\[
  (1+\hat{\delta}^{\alpha k})(\alpha - \lfloor \alpha \rfloor)
  <
  \Big(\alpha(1+\lambda)^{\alpha k}- \sum_{\omega \in K_\alpha}(1+\lambda \omega)^{\alpha k}\Big),
\]
which follows from the definition of $\hat{\delta}.$

\section{Proof of Theorem~\ref{thm:cnc} for $\lfloor\alpha\rfloor$ even}\label{se:even}

We now prove~\eqref{eq:cnc2} in the case that
\begin{equation}\label{eq:alpha2}
   2<\alpha \notin \mathbb N,\quad \lfloor \alpha \rfloor =2m,\quad m\in\mathbb N.
\end{equation}
As $\sin\alpha \pi>0,$ it follows from~\eqref{eq:binom} that~\eqref{eq:cnc2}
is equivalent to
\begin{multline}\label{eq:goal2}
  \int_0^1 F(t,\lambda,k)dt > \frac{\pi}{\alpha \lambda^\alpha \sin \alpha \pi}
    \Big(\sum_{\omega\in K_\alpha}(1+\lambda \omega)^{\alpha k}
    -\alpha(1+\lambda)^{\alpha k}\Big),\\
     \lambda \in (0,1],\ k\in\mathbb N.
\end{multline}
Since $F(\cdot,\lambda,k)$ is positive on $(0,1),$ the following lemma establishes this for $\lambda\in[\tfrac12,1].$
\begin{lemma}\label{le:big la}
  Let $\alpha>2$ be as in~\eqref{eq:alpha2}. Then we have
  \[
    \sum_{\omega\in K_\alpha}(1+\lambda \omega)^{\alpha k} \leq 
    \alpha(1+\lambda)^{\alpha k}, \quad \lambda \in [\tfrac12,1],\ k\in\mathbb N.
  \]
\end{lemma}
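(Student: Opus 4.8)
The plan is to exploit the conjugate-pair structure of $K_\alpha$ together with the triangle inequality, and thereby collapse the whole family of inequalities (one for each $k$) onto a single elementary trigonometric estimate. Since $\lfloor\alpha\rfloor=2m$ and $\alpha\notin\mathbb N$, we have $|K_\alpha|=2m+1$, and by~\eqref{eq:K exp} the roots are $\omega_j=e^{2\pi ij/\alpha}$ for $j=-m,\dots,m$, with $\omega_0=1$ and $\omega_{-j}=\overline{\omega_j}$. For $1\le j\le m$ one has $\theta_j:=2\pi j/\alpha\in(0,\pi)$, so $1+\lambda\omega_j$ has nonzero imaginary part and the $j$ and $-j$ terms are genuine complex conjugates. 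I would first record
\[
  \sum_{\omega\in K_\alpha}(1+\lambda\omega)^{\alpha k}
  = (1+\lambda)^{\alpha k} + 2\sum_{j=1}^m \operatorname{Re}\big[(1+\lambda\omega_j)^{\alpha k}\big]
  \le (1+\lambda)^{\alpha k} + 2\sum_{j=1}^m |1+\lambda\omega_j|^{\alpha k},
\]
using $\operatorname{Re}z\le|z|$ and $|(1+\lambda\omega_j)^{\alpha k}|=|1+\lambda\omega_j|^{\alpha k}$.

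The next step removes the dependence on $k$. Writing $|1+\lambda\omega_j|^2=(1+\lambda)^2(1-u_j)$ with $u_j=2\lambda(1-\cos\theta_j)/(1+\lambda)^2\in[0,1)$, we have $|1+\lambda\omega_j|^{\alpha k}=(1+\lambda)^{\alpha k}(1-u_j)^{\alpha k/2}$. Because $\alpha k/2\ge\alpha/2>1$ and $1-u_j\in[0,1]$, the bound $(1-u_j)^{\alpha k/2}\le 1-u_j$ holds for every $k\ge1$, which is exactly what lets all $k$ be treated at once. The lemma is thus reduced to the $k$-free claim $2\sum_{j=1}^m(1-u_j)\le\alpha-1$. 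Inserting $u_j$ and using $2m=\alpha-f$ with $f:=\alpha-\lfloor\alpha\rfloor\in(0,1)$, this is equivalent to
\[
  \frac{4\lambda}{(1+\lambda)^2}\sum_{j=1}^m(1-\cos\theta_j)\ \ge\ 1-f .
\]

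Finally I would estimate the two factors separately on $\lambda\in[\tfrac12,1]$. The map $\lambda\mapsto 4\lambda/(1+\lambda)^2$ is increasing there, hence at least its value $\tfrac89$ at $\lambda=\tfrac12$. For the trigonometric sum it suffices to keep the single term $j=m$: since $\alpha<2m+1\le 3m$ we get $\theta_m=2\pi m/\alpha>2\pi/3$, so $1-\cos\theta_m>\tfrac32$ and $\sum_{j=1}^m(1-\cos\theta_j)\ge 1-\cos\theta_m>\tfrac32$. Combining, the left-hand side exceeds $\tfrac89\cdot\tfrac32=\tfrac43>1>1-f$, which closes the argument.

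The main obstacle is conceptual rather than computational: the naive estimate $\sum_{\omega\in K_\alpha}(1+\lambda\omega)^{\alpha k}\le(2m+1)(1+\lambda)^{\alpha k}$ is hopeless because $2m+1>\alpha$, exactly as flagged in~\eqref{eq:too weak}. The place where the argument has to \emph{gain} is the passage from $|1+\lambda\omega_j|^{\alpha k}$ to $(1+\lambda)^{\alpha k}(1-u_j)$, which both disposes of the $k$-dependence and turns the problem into a statement about $\sum(1-\cos\theta_j)$. The hypothesis $\lambda\ge\tfrac12$ enters solely to keep $4\lambda/(1+\lambda)^2$ bounded away from $0$, and this is precisely where the method is expected to fail for small $\lambda$, necessitating the finer analysis carried out in the remainder of the section.
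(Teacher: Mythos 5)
Your proof is correct, and its first two steps coincide with the paper's: the paper likewise passes to absolute values, bounding the sum by $\sum_{j=-m}^{m}\bigl(1+2\lambda\cos\tfrac{2j\pi}{\alpha}+\lambda^2\bigr)^{\alpha k/2}$ (your conjugate-pairing with $\operatorname{Re}z\le|z|$ is a cleaner justification of the same bound, legitimate since $\operatorname{Re}(1+\lambda\omega_j)=1+\lambda\cos\theta_j\ge 1-\lambda\ge0$ keeps the principal power compatible with conjugation), and it removes the $k$-dependence by exactly your power trick $x^{p}\le x$ for $x\in[0,1]$, $p=\alpha k/2\ge1$. After clearing $(1+\lambda)^2$, your $k$-free claim $2\sum_{j=1}^m(1-u_j)\le\alpha-1$ is precisely the paper's inequality~\eqref{eq:goal3a}. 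The genuine divergence is in how that reduced inequality is established. The paper evaluates the cosine sum in closed form via the geometric series and then verifies the resulting inequality by polynomial majorants for the trigonometric terms plus cylindrical algebraic decomposition (Mathematica's \texttt{Reduce}), i.e.\ a computer-assisted step. You instead discard every term except $j=m$, note that $\theta_m=2\pi m/\alpha\in(2\pi/3,\pi)$ — the upper bound $\theta_m<\pi$, which you leave implicit, is needed to conclude $\cos\theta_m<-\tfrac12$ and follows from $\alpha>2m$; state it explicitly — and finish with the monotonicity of $4\lambda/(1+\lambda)^2$ on $[0,1]$. This yields a fully elementary, human-checkable proof with generous slack ($\tfrac89\cdot\tfrac32=\tfrac43>1>1-f$ uniformly in $\alpha$ and $m$), and in fact it proves the lemma on the slightly larger range $\lambda\ge 2-\sqrt{3}$, since $6\lambda\ge(1+\lambda)^2$ there; the paper's CAD step buys nothing here beyond mechanical verifiability, so your route is a clear simplification of the lemma's proof, while the hard small-$\lambda$ regime is of course untouched, exactly as you say.
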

\begin{proof}
  By~\eqref{eq:K exp}, we have
  \begin{align}
    \sum_{\omega\in K_\alpha}(1+\lambda \omega)^{\alpha k}
    &\leq \sum_{\omega\in K_\alpha}|1+\lambda \omega|^{\alpha k}\notag \\
    &=\sum_{j=-m}^m \Big(1+2\lambda \cos
      \frac{2j\pi}{\alpha}+\lambda^2\Big)^{\alpha k/2},
      \quad \lambda \in(0,1],\ k\in\mathbb N. \label{eq:K abs}
  \end{align}
  We will show that
  \begin{equation}\label{eq:goal3a}
    \sum_{j=-m}^m \Big(1+2\lambda \cos
      \frac{2j\pi}{\alpha}+\lambda^2\Big)
      \leq \alpha(1+\lambda)^2,\quad \lambda \in[\tfrac12,1].
  \end{equation}
  Then, \eqref{eq:K abs} and~\eqref{eq:goal3a} imply
  \begin{align*}
      \sum_{j=-m}^m \Big(1+&2\lambda \cos
      \frac{2j\pi}{\alpha}+\lambda^2\Big)^{\alpha k/2}=
      (1+\lambda)^{\alpha k}
        \sum_{j=-m}^m \bigg(\frac{1+2\lambda \cos
        \frac{2j\pi}{\alpha}+\lambda^2}{(1+\lambda)^2}\bigg)^{\alpha k/2} \\
      &\qquad\qquad\leq (1+\lambda)^{\alpha k}
        \sum_{j=-m}^m \frac{1+2\lambda \cos
        \frac{2j\pi}{\alpha}+\lambda^2}{(1+\lambda)^2}
        \leq \alpha(1+\lambda)^{\alpha k},
  \end{align*}
  which proves the lemma.
  To prove~\eqref{eq:goal3a},
  observe that~\eqref{eq:alpha2} implies
  \begin{align*}
    |K_\alpha| = \Big \lfloor \frac{\alpha}{2} \Big \rfloor - 
        \Big \lfloor {-\frac{\alpha}{2}} \Big \rfloor 
       = m-(-m-1) = 2m+1 = \lceil \alpha \rceil.
  \end{align*}
  Using the geometric series to evaluate the cosine sum (see, e.g., p.~102
  in~\cite{Mi64}), we obtain
  \begin{multline}\label{eq:j sum}
     \sum_{j=-m}^m \Big(1+2\lambda \cos
      \frac{2j\pi}{\alpha}+\lambda^2\Big)
     = \lceil \alpha \rceil (1+\lambda^2) \\
     +2\lambda\bigg(
       1 + 2 \frac{\cos\big((m+1)\pi/\alpha\big)\sin(m\pi/\alpha)}
         {\sin(\pi/\alpha)}
     \bigg).
  \end{multline}
  Here, $\cos\big((m+1)\pi/\alpha\big)<0,$ and the sines are both positive.
  By~\eqref{eq:j sum} and the elementary inequalities
  \begin{align*}
    \cos x &\leq -1 +\tfrac12(x-\pi)^2,\quad x\in\mathbb R, \\
    \sin x &\leq x,\quad x\geq 0, \\
    \sin x &\geq x-\tfrac16 x^3,\quad x\geq 0,
  \end{align*}
  the following statement is sufficient for the validity of~\eqref{eq:goal3a}:
  \begin{multline}\label{eq:cad}
    (A+1)(1+\lambda^2)+2\lambda\bigg(
    1 + 2\frac{\big({-1}+\frac12\big(\frac{(M+1)\pi}{A}-\pi\big)^2\big)\frac{M\pi}{A}}
      {\frac{\pi}{A}-\frac16 (\frac{\pi}{A})^3}
      \bigg)
      \leq A(1+\lambda)^2,\\ \quad
      A>2,\ M>0,\ 2M < A < 2M+1,\ \tfrac12 \leq \lambda \leq 1. 
  \end{multline}
  This is a polynomial inequality in real variables
  with polynomial constraints, which can be verified
  by cylindrical algebraic decomposition, using a computer algebra system.
  For instance, using Mathematica's {\tt Reduce} command on~\eqref{eq:cad},
  with the first $\leq$ replaced by $>$, yields {\tt False}. This shows
  that~\eqref{eq:cad} is correct, which finishes the proof.
\end{proof}

The following two lemmas will be required for some estimates
of the function~$F(t,\lambda,k)$ for small~$\lambda.$
\begin{lemma}\label{le:int}
For $\alpha$ as in~\eqref{eq:alpha2}, we have
\begin{equation}\label{eq:int1}
  \int_0^\infty \frac{s^\alpha}{s^{2\alpha} -2s^\alpha \cos\alpha \pi+1}ds
  =\frac{\pi\sin\big(\frac{\lfloor \alpha \rfloor+1}{\alpha}\pi\big)}
        {\alpha \sin(\alpha \pi) \sin\big(\frac{\alpha+1}{\alpha}\pi\big)}
\end{equation}
and
\begin{equation}\label{eq:int2}
  \int_0^\infty \frac{s^{\alpha-1}}{s^{2\alpha} -2s^\alpha \cos\alpha \pi+1}ds
  =\frac{\pi (\lceil \alpha \rceil-\alpha)}{\alpha \sin \alpha \pi}.
\end{equation}
\end{lemma}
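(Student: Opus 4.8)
The plan is to reduce both integrals to one classical Mellin-type integral via the substitution $u=s^\alpha$, and then to read off the closed forms using the angle reduction $\alpha\pi\mapsto(\alpha-2m)\pi$ made available by the hypothesis $\lfloor\alpha\rfloor=2m$ in~\eqref{eq:alpha2}.

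First I would put $u=s^\alpha$, so that $ds=\tfrac1\alpha u^{1/\alpha-1}\,du$ and $s^{2\alpha}=u^2$. Both integrands then take the common shape $u^{\mu-1}(u^2-2u\cos\alpha\pi+1)^{-1}$: the left-hand side of~\eqref{eq:int1} becomes $\tfrac1\alpha\int_0^\infty u^{1/\alpha}(u^2-2u\cos\alpha\pi+1)^{-1}\,du$, i.e.\ $\mu=1+\tfrac1\alpha$, while the left-hand side of~\eqref{eq:int2} becomes $\tfrac1\alpha\int_0^\infty(u^2-2u\cos\alpha\pi+1)^{-1}\,du$, i.e.\ $\mu=1$. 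Since $\alpha>1$ we have $0<\mu<2$ in both cases, so the integrals converge (the integrand decays like $u^{\mu-3}$ at infinity).

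The crucial step is to rewrite the quadratic as $u^2-2u\cos\beta+1$ with $\beta:=(\alpha-2m)\pi\in(0,\pi)$; because $2m=\lfloor\alpha\rfloor$ is even we get both $\cos\beta=\cos\alpha\pi$ and, importantly, $\sin\beta=\sin\alpha\pi>0$. For $\beta\in(0,\pi)$ and $0<\mu<2$ the standard evaluation, obtained from the partial fraction
\[
  \frac{1}{(u-e^{i\beta})(u-e^{-i\beta})}
  =\frac{1}{2i\sin\beta}\Big(\frac{1}{u-e^{i\beta}}-\frac{1}{u-e^{-i\beta}}\Big)
\]
together with $\int_0^\infty\frac{u^{\mu-1}}{u+z}\,du=\frac{\pi z^{\mu-1}}{\sin\mu\pi}$ (valid first for $0<\mu<1$, then extended by analytic continuation in $\mu$), yields
\[
  \int_0^\infty\frac{u^{\mu-1}}{u^2-2u\cos\beta+1}\,du
  =\frac{\pi\,\sin\!\big((\pi-\beta)(1-\mu)\big)}{\sin\beta\,\sin\mu\pi}.
\]
For~\eqref{eq:int1}, with $\mu=1+\tfrac1\alpha$ one computes $(\pi-\beta)(1-\mu)=-(2m+1-\alpha)\pi/\alpha$, so that $\sin\big((\pi-\beta)(1-\mu)\big)=\sin\big(\tfrac{(2m+1)\pi}{\alpha}\big)=\sin\big(\tfrac{\lfloor\alpha\rfloor+1}{\alpha}\pi\big)$, while $\sin\mu\pi=-\sin(\pi/\alpha)=\sin\big(\tfrac{\alpha+1}{\alpha}\pi\big)$ and $\sin\beta=\sin\alpha\pi$; restoring the factor $\tfrac1\alpha$ reproduces the right-hand side of~\eqref{eq:int1}. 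For~\eqref{eq:int2}, the value $\mu=1$ sits at the removable zero $\sin\mu\pi=0$ of the formula, so I would pass to the limit $\mu\to1$ (equivalently evaluate the $\mu=1$ integral directly as an arctangent), obtaining $\tfrac1\alpha\cdot\tfrac{\pi-\beta}{\sin\beta}=\tfrac{\pi(2m+1-\alpha)}{\alpha\sin\alpha\pi}=\tfrac{\pi(\lceil\alpha\rceil-\alpha)}{\alpha\sin\alpha\pi}$, using $\lceil\alpha\rceil=2m+1$.

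The main obstacle here is bookkeeping rather than anything conceptual: one must (i) justify the Mellin formula on the full range $0<\mu<2$ even though the individual partial-fraction pieces converge only for $0<\mu<1$, which is what forces the analytic-continuation step; (ii) carry out the angle reduction correctly so that $\beta\in(0,\pi)$ and $\sin\beta=+\sin\alpha\pi$, which is precisely where the parity hypothesis $\lfloor\alpha\rfloor=2m$ is used; and (iii) reconcile the several equivalent trigonometric forms, in particular recognizing $\sin\big(\tfrac{(2m+1)\pi}{\alpha}\big)=\sin\big(\tfrac{\lfloor\alpha\rfloor+1}{\alpha}\pi\big)$ and $\sin\big(\tfrac{\alpha+1}{\alpha}\pi\big)=-\sin(\pi/\alpha)$, to land exactly on the stated closed forms.
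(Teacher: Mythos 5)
Your proof is correct and follows essentially the same route as the paper: the substitution $u=s^\alpha$, the angle reduction using that $\lfloor\alpha\rfloor=2m$ is even, and the classical Mellin evaluation of $\int_0^\infty u^{\mu-1}\big(u^2-2u\cos\beta+1\big)^{-1}\,du$ at $\mu=1+\tfrac1\alpha$ and at the removable point $\mu=1$. The only difference is that the paper cites this evaluation from integral tables (Gradshteyn--Ryzhik 3.242 and Gr\"obner--Hofreiter 11a), whereas you derive it via partial fractions and analytic continuation in $\mu$, which makes the argument self-contained.
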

\begin{proof}
   By substituting $s^\alpha=w,$
  \begin{align*}
      \int_0^\infty \frac{s^\alpha}{s^{2\alpha} -2s^\alpha \cos\alpha \pi+1}ds
      &= \frac{1}{\alpha} \int_0^\infty \frac{w^{1/\alpha}}{w^{2} -2w\cos\alpha \pi+1}dw\\
&= \frac{1}{\alpha} \int_0^\infty \frac{w^{1/\alpha}}{w^{2} +2w\cos(\pi(\alpha-\lfloor \alpha \rfloor-1))+1}dw,
  \end{align*}
  where we have used that $\lfloor \alpha \rfloor$ is even.
  The first formula now follows from 3.242 on p.~322 of~\cite{GrRy07}, with $m=1/(2\alpha),$ $n=\tfrac12$ and
  $t=\pi(\alpha-\lfloor \alpha \rfloor-1).$ As for~\eqref{eq:int2},
   \begin{align*}
      \int_0^\infty \frac{s^{\alpha-1}}{s^{2\alpha} -2s^\alpha \cos\alpha \pi+1}ds
      &= \frac{1}{\alpha} \int_0^\infty \frac{1}{w^{2} -2w\cos\alpha \pi+1}dw\\
&= \frac{1}{\alpha} \int_0^\infty \frac{1}{w^{2} +2w\cos(\pi(\alpha-\lfloor \alpha \rfloor-1))+1}dw.
  \end{align*}
  The identity~\eqref{eq:int2} then follows from 11a) on p.~14
  of~\cite{GrHo73}, with
  \[
    \lambda = \pi(\alpha-\lfloor \alpha \rfloor-1)
    =\pi(\alpha-\lceil \alpha \rceil). \qedhere
  \]
\end{proof}
\begin{lemma}\label{le:int asympt}
  Again, suppose that $\alpha$ satisfies~\eqref{eq:alpha2}.
  For $k\in\mathbb N,$ we have
  \begin{multline*}
    \int_0^{1/\lambda} \frac{s^{\alpha-1}(1-\lambda s)^{\alpha k}}
      {s^{2\alpha}-2s^\alpha \cos\alpha \pi +1} ds=
     \int_0^{\infty} \frac{s^{\alpha-1}}
      {s^{2\alpha}-2s^\alpha \cos\alpha \pi +1} ds\\
     -\alpha k \lambda \int_0^{\infty} \frac{s^{\alpha}}
      {s^{2\alpha}-2s^\alpha \cos\alpha \pi +1} ds
      +\mathrm{o}(\lambda), \quad \lambda \downarrow 0.
  \end{multline*}
\end{lemma}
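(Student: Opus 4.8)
The plan is to read the statement as a first-order Taylor expansion in $\lambda$ of the factor $(1-\lambda s)^{\alpha k}$, justified by dominated convergence. First I would record that the denominator is harmless: since $\alpha\notin\mathbb N$ by~\eqref{eq:alpha2}, we may write
\[
  s^{2\alpha}-2s^\alpha\cos\alpha\pi+1=(s^\alpha-\cos\alpha\pi)^2+\sin^2\alpha\pi\geq \sin^2\alpha\pi>0,
\]
so the integrand has no singularity on $(0,\infty)$. As $s\to\infty$ the two integrands $s^{\alpha-1}/(s^{2\alpha}-2s^\alpha\cos\alpha\pi+1)$ and $s^{\alpha}/(s^{2\alpha}-2s^\alpha\cos\alpha\pi+1)$ decay like $s^{-\alpha-1}$ and $s^{-\alpha}$, respectively, so both of the improper integrals appearing on the right-hand side converge because $\alpha>2$; in fact their exact values are already supplied by Lemma~\ref{le:int}. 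These are precisely the two integrals we must reproduce.

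Next I would separate the truncation from the Taylor remainder. Splitting $\int_0^{1/\lambda}=\int_0^\infty-\int_{1/\lambda}^\infty$ in the leading term, the tail is controlled by
\[
  \int_{1/\lambda}^\infty\frac{s^{\alpha-1}}{s^{2\alpha}-2s^\alpha\cos\alpha\pi+1}\,ds=\mathrm{O}(\lambda^\alpha)=\mathrm{o}(\lambda),
\]
again using $\alpha>2$. It then remains to analyse
\[
  \frac1\lambda\int_0^{1/\lambda}\frac{s^{\alpha-1}\big((1-\lambda s)^{\alpha k}-1\big)}{s^{2\alpha}-2s^\alpha\cos\alpha\pi+1}\,ds
\]
and to show that it tends to $-\alpha k\int_0^\infty s^{\alpha}/(s^{2\alpha}-2s^\alpha\cos\alpha\pi+1)\,ds$. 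For each fixed $s$ the integrand converges pointwise to $-\alpha k\,s^{\alpha}/(s^{2\alpha}-2s^\alpha\cos\alpha\pi+1)$ as $\lambda\downarrow0$, so the conclusion will follow from dominated convergence once I exhibit a $\lambda$-uniform integrable majorant.

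The key, and the only delicate point, is the uniform domination near the upper endpoint $s=1/\lambda$, where $(1-\lambda s)^{\alpha k}$ degenerates to $0$. Here I would invoke Bernoulli's inequality $(1-v)^{\alpha k}\geq 1-\alpha k\,v$, valid for $v=\lambda s\in[0,1]$ and $\alpha k\geq 1$, which gives
\[
  0\leq \frac{1-(1-\lambda s)^{\alpha k}}{\lambda}\leq \alpha k\,s,\qquad 0\leq s\leq \tfrac1\lambda.
\]
Extending the integrand by $0$ for $s>1/\lambda$, its modulus is therefore dominated by $\alpha k\,s^{\alpha}/(s^{2\alpha}-2s^\alpha\cos\alpha\pi+1)$, which is integrable on $(0,\infty)$ since $\alpha>1$. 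Dominated convergence then yields the stated limit, and combining with the $\mathrm{o}(\lambda)$ tail estimate completes the proof. Alternatively, one could make the remainder explicit, bounding $(1-\lambda s)^{\alpha k}-1+\alpha k\lambda s=\mathrm{O}(\lambda^2 s^2)$ uniformly on $[0,1/\lambda]$ and estimating $\lambda^2\int_0^{1/\lambda}s^{\alpha+1}/(s^{2\alpha}-2s^\alpha\cos\alpha\pi+1)\,ds=\mathrm{O}(\lambda^2)$; this is sharper than needed but again relies on $\alpha>2$ to keep $\int_0^\infty s^{\alpha+1}/(s^{2\alpha}-2s^\alpha\cos\alpha\pi+1)\,ds$ finite.
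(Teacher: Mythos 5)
Your proof is correct, but it takes a genuinely different route from the paper's. The paper works with an intermediate cutoff $\lambda^{-\beta}$ for a fixed $\beta\in(\tfrac1\alpha,\tfrac12)$: on $[0,\lambda^{-\beta}]$ the quantity $\lambda s\leq\lambda^{1-\beta}$ is uniformly small, so the first-order expansion $(1-\lambda s)^{\alpha k}=1-\alpha k\lambda s+\mathrm{O}(\lambda^{2-2\beta})$ holds uniformly with remainder $\mathrm{o}(\lambda)$ (this is where $\beta<\tfrac12$ enters), while the ranges $[\lambda^{-\beta},1/\lambda]$ and $[\lambda^{-\beta},\infty)$ are discarded using $0\leq(1-\lambda s)^{\alpha k}\leq1$ and the tail bounds $\mathrm{O}(\lambda^{\alpha\beta})=\mathrm{o}(\lambda)$ resp.\ $\mathrm{O}(\lambda^{\beta(\alpha-1)})=\mathrm{o}(1)$ (this is where $\beta>\tfrac1\alpha$ enters; the two constraints are compatible precisely because $\alpha>2$). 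You avoid the intermediate scale altogether: after peeling off the $\mathrm{O}(\lambda^\alpha)$ truncation error, you treat the difference quotient $\lambda^{-1}\big((1-\lambda s)^{\alpha k}-1\big)$ by dominated convergence, with the Bernoulli bound $0\leq\lambda^{-1}\big(1-(1-\lambda s)^{\alpha k}\big)\leq\alpha k\,s$ supplying a $\lambda$-free integrable majorant $\alpha k\,s^{\alpha}/(s^{2\alpha}-2s^\alpha\cos\alpha\pi+1)$. Both arguments are sound; yours is shorter, dispenses with the choice of $\beta$, and in fact only needs $\alpha>1$ (non-integer), so it proves the lemma in slightly greater generality than the paper's two-scale argument, which structurally requires $\alpha>2$. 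Your alternative with the explicit second-order bound $0\leq(1-\lambda s)^{\alpha k}-1+\alpha k\lambda s\leq\tfrac12\alpha k(\alpha k-1)\lambda^2s^2$ is also valid and even sharpens the error from $\mathrm{o}(\lambda)$ to $\mathrm{O}(\lambda^2)$, at the cost of genuinely using $\alpha>2$ for the finiteness of $\int_0^\infty s^{\alpha+1}/(s^{2\alpha}-2s^\alpha\cos\alpha\pi+1)\,ds$, exactly as you note; the paper's quantitative splitting, by contrast, keeps all error exponents explicit without invoking any convergence theorem.
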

\begin{proof}
  Fix some $\beta\in(\tfrac{1}{\alpha},\tfrac12).$ 
  We have
  \begin{equation}\label{eq:a-1}
    \int_{\lambda^{-\beta}}^\infty \frac{s^{\alpha-1}}{s^{2\alpha} -2s^\alpha \cos\alpha \pi+1}ds
    =\mathrm{O}(\lambda^{\alpha \beta})
    =\mathrm{o}(\lambda),\quad \lambda\downarrow0,
  \end{equation}
  and
  \begin{equation}\label{eq:a}
    \int_{\lambda^{-\beta}}^\infty \frac{s^{\alpha}}{s^{2\alpha} -2s^\alpha \cos\alpha \pi+1}ds
    =\mathrm{O}(\lambda^{\beta(\alpha-1)})
    =\mathrm{o}(1),\quad \lambda\downarrow0.
  \end{equation}
  These assertions easily follow from the fact that the integrand is of
  order $\mathrm{O}(s^{-\alpha-1})$ resp.\ $\mathrm{O}(s^{-\alpha})$ at infinity.
  Moreover, we have the uniform expansion
  \begin{align}
    (1-\lambda s)^{\alpha k} &= 1 - \alpha k\lambda s + \mathrm{O}(\lambda^{2-2\beta}) \notag \\
    &=1 - \alpha k\lambda s +\mathrm{o}(\lambda),
    \quad 0\leq s\leq \lambda^{-\beta},\ \lambda\downarrow0. \label{eq:unif}
  \end{align}
  Since $0\leq(1-\lambda s)^{\alpha k}\leq 1,$ \eqref{eq:a-1} implies
  \begin{align*}
      \int_0^{1/\lambda} \frac{s^{\alpha-1}(1-\lambda s)^{\alpha k}}
      {s^{2\alpha}-2s^\alpha \cos\alpha \pi +1} ds
      &= \int_0^{\lambda^{-\beta}} \frac{s^{\alpha-1}(1-\lambda s)^{\alpha k}}
      {s^{2\alpha}-2s^\alpha \cos\alpha \pi +1} ds + \mathrm{o}(\lambda).
  \end{align*}
  The statement now follows from~\eqref{eq:unif}, \eqref{eq:a} and \eqref{eq:a-1}.
\end{proof}
We now continue the proof of~\eqref{eq:goal2}. 
{}From the definition of~$F$, it is clear that
\begin{align}
  \int_0^1 F(t,\lambda,k)dt &\geq 
  \int_0^1 \frac{t^{\alpha-1}(1-t)^{\alpha k}}{|t^\alpha -\lambda^\alpha e^{-i\alpha \pi}|^2} dt \notag\\
  &= \lambda^{-\alpha} \int_0^{1/\lambda} \frac{s^{\alpha-1}(1-\lambda s)^{\alpha k}}
    {s^{2\alpha}-2s^\alpha \cos\alpha \pi +1} ds. \label{eq:F lower}
\end{align}
As noted above, from~\eqref{eq:K exp}, we have
\begin{align}
  \sum_{\omega\in K_\alpha}(1+\lambda \omega)^{\alpha k}
  &\leq \sum_{\omega\in K_\alpha}|1+\lambda \omega|^{\alpha k} \notag \\
  &=1 + 2\sum_{j=1}^m \Big(1+2\lambda \cos
    \frac{2j\pi}{\alpha}+\lambda^2\Big)^{\alpha k/2},
    \quad \lambda \in(0,1],\ k\in\mathbb N. \label{eq:est sum}
\end{align}
Since Lemma~\ref{le:big la} settles the case
$\lambda\in[\tfrac12,1],$ we may assume $\lambda\in(0,\tfrac12)$ in what follows.
Using~\eqref{eq:F lower} and~\eqref{eq:est sum} in~\eqref{eq:goal2}, we see that
it is sufficient to show
\begin{multline}\label{eq:goal3}
  2\sum_{j=1}^m \Big(1+2\lambda \cos \frac{2j\pi}{\alpha}+\lambda^2\Big)^{\alpha k/2}
  +(1-\alpha)(1+\lambda)^{\alpha k}\\
  -\frac{\alpha \sin \alpha \pi}{\pi} \int_0^{1/\lambda} \frac{s^{\alpha-1}(1-\lambda s)^{\alpha k}}
    {s^{2\alpha}-2s^\alpha \cos\alpha \pi +1} ds < 0,
    \quad \lambda \in(0,\tfrac12),\ k\in\mathbb N.
\end{multline}
This will be proven in the following two lemmas.
\begin{lemma}\label{le:decrease}
 For $\lambda\in(0,\tfrac12),$ the left hand side of~\eqref{eq:goal3} decreases w.r.t.~$\lambda$.
\end{lemma}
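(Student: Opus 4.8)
The plan is to prove the slightly stronger statement that the left-hand side of~\eqref{eq:goal3}, which I denote by $L(\lambda,k)$, has strictly negative partial derivative in $\lambda$ on $(0,\tfrac12)$. Writing $c_j:=\cos(2j\pi/\alpha)$, I would first differentiate under the integral sign. The boundary term from the upper limit $1/\lambda$ vanishes, because the integrand carries the factor $(1-\lambda s)^{\alpha k}$, which is zero at $s=1/\lambda$ (here $\alpha k>1$). Hence
\[
  \partial_\lambda L(\lambda,k) = \alpha k\,\Big( S(\lambda,k) + (1-\alpha)(1+\lambda)^{\alpha k-1} + I(\lambda,k)\Big),
\]
where $S(\lambda,k):=2\sum_{j=1}^m(c_j+\lambda)\big(1+2\lambda c_j+\lambda^2\big)^{\alpha k/2-1}$ comes from the cosine sum, the middle term is negative since $\alpha>1$, and
\[
  I(\lambda,k):=\frac{\alpha\sin\alpha\pi}{\pi}\int_0^{1/\lambda}
    \frac{s^{\alpha}(1-\lambda s)^{\alpha k-1}}{s^{2\alpha}-2s^\alpha\cos\alpha\pi+1}\,ds>0
\]
is the positive contribution of the integral (recall $\sin\alpha\pi>0$ under~\eqref{eq:alpha2}). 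The goal is to show that the bracket is negative for all $\lambda\in(0,\tfrac12)$ and $k\in\mathbb N$.

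The key quantitative input is an estimate for $I$. Since $0\le(1-\lambda s)^{\alpha k-1}\le1$, extending the integral to $(0,\infty)$ and invoking~\eqref{eq:int1} (together with $\sin\big((\alpha+1)\pi/\alpha\big)=-\sin(\pi/\alpha)$) yields the $\lambda$- and $k$-uniform bound
\[
  I(\lambda,k)\le R:=\frac{\sin\big((2m+1)\pi/\alpha\big)}{-\sin(\pi/\alpha)}>0,
\]
the positivity of $R$ following from $\lfloor\alpha\rfloor=2m$ and $\alpha\in(2m,2m+1)$. On the other hand, the Dirichlet kernel identity already used in~\eqref{eq:j sum}, namely $1+2\sum_{j=1}^m c_j=\sin\big((2m+1)\pi/\alpha\big)/\sin(\pi/\alpha)=-R$, gives $S(0,k)=-1-R$. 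Consequently the bracket at $\lambda=0$ equals $S(0,k)+(1-\alpha)+R=-\alpha<0$, independently of $k$; this both confirms the sign and identifies $-\alpha$ as the value of $\partial_\lambda L(0,k)/(\alpha k)$.

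It remains to propagate negativity to $\lambda\in(0,\tfrac12)$. Here I would bound $S$ from above using $1+2\lambda c_j+\lambda^2\le(1+\lambda)^2$ with $\alpha k/2-1>0$, and by discarding the terms with $c_j+\lambda\le0$, giving $S(\lambda,k)\le 2(1+\lambda)^{\alpha k-2}\Sigma^+$ with $\Sigma^+:=\sum_{j:\,c_j>-\lambda}(c_j+\lambda)$. Combining with $I\le R$ and factoring out $(1+\lambda)^{\alpha k-2}$ reduces the claim to $(1+\lambda)^{\alpha k-2}\big(2\Sigma^+-(\alpha-1)(1+\lambda)\big)+R\le0$. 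For large $k$ the prefactor blows up, so it suffices that $2\Sigma^+\le(\alpha-1)(1+\lambda)$; this is plausible because at most $\lfloor\alpha/4\rfloor$ of the $c_j$ are positive and each is strictly below $1$, while $\alpha-1>2m-1$. The remaining finitely many small $k$, after restricting $\lambda$ to the compact interval $[0,\tfrac12]$, reduce to a polynomial--trigonometric inequality in $m$ and the fractional part of $\alpha$, which can be checked by the elementary $\sin$/$\cos$ estimates of Lemma~\ref{le:big la} or, failing that, by cylindrical algebraic decomposition as there.

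The main obstacle will be the uniform control of the positive term $I$ against the negative power term for \emph{moderate} $\lambda$ and \emph{small} $k$: the crude bound $I\le R$ is exact only at $\lambda=0$ and becomes wasteful as $\lambda$ grows, while the gain in $(1-\alpha)(1+\lambda)^{\alpha k-1}$ is only first order in $\lambda$ when $k=1$. If the reduction $2\Sigma^+\le(\alpha-1)(1+\lambda)$ proves too lossy near the endpoint $\alpha\to(2m)^+$, where $\alpha-1$ is smallest relative to the number of positive cosines, one must retain the exact decay $(1-\lambda s)^{\alpha k-1}$ in $I$ and exploit the strict inequalities $1+2\lambda c_j+\lambda^2<(1+\lambda)^2$ rather than discard information; this is the delicate point of the argument.
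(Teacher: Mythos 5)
Your route is the paper's: differentiate the left-hand side of~\eqref{eq:goal3} in $\lambda$, observe that the boundary term at $s=1/\lambda$ vanishes, discard the cosine terms with $c_j+\lambda\le 0$ (this is exactly where $\lambda<\tfrac12$ enters, since $j>\alpha/3$ forces $c_j<-\tfrac12$), and control the integral contribution uniformly by extending it to $(0,\infty)$ and invoking~\eqref{eq:int1}. Up to and including the bound $I\le R$ your proposal coincides with the paper's proof (and your derivative is in fact the correct one: the paper's display~\eqref{eq:tbc} drops the factor $\alpha k$ in front of the integral term, a harmless slip there). But your concluding step has a genuine gap. The reduction ``it suffices that $2\Sigma^+\le(\alpha-1)(1+\lambda)$'' is false as stated: it only gives
\[
  (1+\lambda)^{\alpha k-2}\bigl(2\Sigma^+-(\alpha-1)(1+\lambda)\bigr)+R\;\le\;R,
\]
and $R>0$, so you must beat $R$, not merely reach $0$. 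The fallback ``for large $k$ the prefactor blows up'' is not uniform in $\lambda$: for each fixed $k$ one has $(1+\lambda)^{\alpha k-2}\to 1$ as $\lambda\downarrow 0$, so the threshold for ``large $k$'' degenerates near $\lambda=0$, there is no finite exceptional set of $k$'s valid on all of $(0,\tfrac12)$, and the proposed case-by-case (CAD) verification would range over an unbounded family of inequalities indexed by~$k$.

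The gap closes with the one quantitative input you did not use: $R<1$, which is~\eqref{eq:with sine} and follows from~\eqref{eq:floor alpha} by monotonicity of $\sin$ on $(\pi,\tfrac{3\pi}{2})$. Indeed, at most $\lfloor\alpha/3\rfloor$ indices satisfy $c_j>-\lambda$, and each retained summand is at most $(1+\lambda)\bigl((1+\lambda)^2\bigr)^{\alpha k/2-1}=(1+\lambda)^{\alpha k-1}$, so your bracket is at most
\[
  (1+\lambda)^{\alpha k-1}\Bigl(2\Bigl\lfloor\frac{\alpha}{3}\Bigr\rfloor+1-\alpha\Bigr)+R
  \;\le\;(1+\lambda)^{\alpha k-1}\Bigl(2\Bigl\lfloor\frac{\alpha}{3}\Bigr\rfloor+2-\alpha\Bigr)\;\le\;0,
\]
using $R<1\le(1+\lambda)^{\alpha k-1}$ and the elementary fact $2\lfloor\alpha/3\rfloor+2-\alpha\le0$ for $\alpha$ as in~\eqref{eq:alpha2}; no split over $k$ and no computer algebra are needed, and this is precisely how the paper finishes. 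Two of your observations are worth keeping nonetheless: the explicit factor $\alpha k$ in the derivative, and the Dirichlet-kernel identity $1+2\sum_{j=1}^m\cos(2j\pi/\alpha)=-R$, which shows your bracket equals exactly $-\alpha$ at $\lambda=0$. The latter is more than a sanity check: it evaluates the coefficient of $\lambda$ in~\eqref{eq:sharp} in closed form as $-\alpha^2 k$, which would render~\eqref{eq:final goal} and the ad hoc sine estimates in the paper's Lemma~\ref{le:la small} unnecessary.
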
 
\begin{proof}
The derivative of this expression is
\begin{align}
  2\alpha k \sum_{j=1}^m&\Big( \cos \frac{2j\pi}{\alpha}+\lambda\Big)
     \Big(1+2\lambda \cos \frac{2j\pi}{\alpha}+\lambda^2\Big)^{\alpha k/2-1}
       + \alpha k(1-\alpha)(1+\lambda)^{\alpha k-1}\notag \\
     &\qquad +\frac{\alpha \sin \alpha \pi}{\pi} 
     \int_0^{1/\lambda} \frac{s^{\alpha}(1-\lambda s)^{\alpha k-1}}
    {s^{2\alpha}-2s^\alpha \cos\alpha \pi +1} ds \notag\\
   &\leq 2\alpha k \sum_{1\leq j\leq \alpha/3}\Big( \cos \frac{2j\pi}{\alpha}+\lambda\Big)
     \Big(1+2\lambda \cos \frac{2j\pi}{\alpha}+\lambda^2\Big)^{\alpha k/2-1}\notag\\
    &\qquad   + \alpha k(1-\alpha)(1+\lambda)^{\alpha k-1}
    +\frac{\alpha \sin \alpha \pi}{\pi} 
     \int_0^{\infty} \frac{s^{\alpha}}{s^{2\alpha}-2s^\alpha \cos\alpha \pi +1} ds. \label{eq:tbc}
\end{align}
Note that it is easy to show that $\cos(2j\pi/\alpha)+\lambda<0$ for $j>\alpha/3,$
which is where we use our assumption that $\lambda<\tfrac12$. Thus, we are discarding
only negative terms when passing from $\sum_{j=1}^m$ to $\sum_{1\leq j\leq \alpha/3}$
in~\eqref{eq:tbc}.
By Lemma~\ref{le:int},  the last term in~\eqref{eq:tbc} satisfies
\begin{equation}\label{eq:with sine}
  \frac{\alpha \sin \alpha \pi}{\pi} 
     \int_0^{\infty} \frac{s^{\alpha}}{s^{2\alpha}-2s^\alpha \cos\alpha \pi +1} ds
     = \frac{\sin\big(\frac{\lfloor \alpha \rfloor+1}{\alpha}\pi\big)}
        {\sin\big(\frac{\alpha+1}{\alpha}\pi\big)}
        < 1,
\end{equation}
where the inequality follows from
\begin{equation}\label{eq:floor alpha}
  1 < \frac{\lfloor \alpha \rfloor+1}{\alpha}
  < \frac{\alpha+1}{\alpha} < \frac32.
\end{equation}
We can thus estimate~\eqref{eq:tbc} further by
\begin{align*}
  2\alpha k\sum_{1\leq j\leq \alpha/3}
  (1+\lambda)&\big((1+\lambda)^2\big)^{\alpha k/2-1}
  +\alpha k(1-\alpha)(1+\lambda)^{\alpha k-1} +1 \\
  &= 2\alpha k \Big\lfloor \frac{\alpha}{3} \Big\rfloor(1+\lambda)^{\alpha k-1}
     +\alpha k(1-\alpha)(1+\lambda)^{\alpha k-1} +1\\
  &\leq\alpha k(1+\lambda)^{\alpha k-1}\Big(2\Big\lfloor \frac{\alpha}{3} \Big\rfloor+2-\alpha\Big)
  \leq 0.
\end{align*}
Indeed, it is easy to see that $2\lfloor \alpha/3 \rfloor+2-\alpha \leq0$
for $\alpha$ as in~\eqref{eq:alpha2}.
\end{proof}
Presumably, the preceding lemma can be extended to $\lambda\in(0,1],$
but  this would require a much better estimate for 
 $\sum_{\omega \in K_\alpha}(1+\lambda\omega)^{\alpha k}$
 than the one we have used.
The proof of~\eqref{eq:cnc2} could then possibly be streamlined, because Lemma~\ref{le:big la}
would no longer be required.
\begin{lemma}\label{le:la small}
  Let $\alpha$ be as in~\eqref{eq:alpha2}, and $k\in\mathbb N.$
  Then~\eqref{eq:goal3} holds for $\lambda>0$ sufficiently small.
\end{lemma}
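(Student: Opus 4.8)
The plan is to expand the left-hand side of~\eqref{eq:goal3} to first order in $\lambda$ as $\lambda\downarrow0$ and to check that the coefficient of $\lambda$ is strictly negative; since the constant term will turn out to vanish, this settles the sign for small $\lambda$. Write $L(\lambda)$ for the left-hand side of~\eqref{eq:goal3} with $k$ fixed. For the first two terms I would Taylor-expand directly: each summand $(1+2\lambda\cos(2j\pi/\alpha)+\lambda^2)^{\alpha k/2}$ and the power $(1+\lambda)^{\alpha k}$ is smooth at $\lambda=0$ with remainder $\mathrm{O}(\lambda^2)=\mathrm{o}(\lambda)$, yielding
\[
 2\sum_{j=1}^m\Big(1+2\lambda\cos\tfrac{2j\pi}{\alpha}+\lambda^2\Big)^{\alpha k/2}
 = 2m + 2\alpha k\lambda\sum_{j=1}^m\cos\tfrac{2j\pi}{\alpha}+\mathrm{o}(\lambda)
\]
together with $(1-\alpha)(1+\lambda)^{\alpha k}=(1-\alpha)+(1-\alpha)\alpha k\lambda+\mathrm{o}(\lambda)$. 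For the integral term I would invoke Lemma~\ref{le:int asympt}, replacing it by $I_1-\alpha k\lambda I_2+\mathrm{o}(\lambda)$, where $I_1$ and $I_2$ are precisely the two integrals evaluated in closed form in Lemma~\ref{le:int}.

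Next I would substitute those closed forms. Since $\frac{\alpha\sin\alpha\pi}{\pi}I_1=\lceil\alpha\rceil-\alpha=2m+1-\alpha$, the constant term of $L(\lambda)$ is $2m+(1-\alpha)-(2m+1-\alpha)=0$, as the sanity checks in Section~\ref{se:prelim} lead one to expect. For the linear coefficient, set $C:=\frac{\alpha\sin\alpha\pi}{\pi}I_2=\frac{\sin((\lfloor\alpha\rfloor+1)\pi/\alpha)}{\sin((\alpha+1)\pi/\alpha)}$, which is exactly the quantity already bounded in~\eqref{eq:with sine}. Collecting the three $\lambda$-contributions, the coefficient of $\lambda$ in $L(\lambda)$ equals $\alpha k\big(2\sum_{j=1}^m\cos\tfrac{2j\pi}{\alpha}+1-\alpha+C\big)$, so everything reduces to identifying this bracket.

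The heart of the argument, and the step I expect to require the most care, is to show that the bracketed factor collapses to $-\alpha$. For this I would evaluate the cosine sum by the Dirichlet-kernel identity $2\sum_{j=1}^m\cos\tfrac{2j\pi}{\alpha}=\frac{\sin(\lceil\alpha\rceil\pi/\alpha)}{\sin(\pi/\alpha)}-1$, using $\lceil\alpha\rceil=2m+1$. Separately, I would simplify $C$ via $\lfloor\alpha\rfloor+1=\lceil\alpha\rceil$ together with the reflection $\sin\tfrac{(\alpha+1)\pi}{\alpha}=\sin\big(\pi+\tfrac{\pi}{\alpha}\big)=-\sin\tfrac{\pi}{\alpha}$, which gives $C=-\frac{\sin(\lceil\alpha\rceil\pi/\alpha)}{\sin(\pi/\alpha)}$. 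The two contributions then cancel term by term, leaving $1-\alpha-1=-\alpha$, so the coefficient of $\lambda$ is $-\alpha^2k<0$. Hence $L(\lambda)=-\alpha^2k\,\lambda+\mathrm{o}(\lambda)<0$ for all sufficiently small $\lambda>0$, which is~\eqref{eq:goal3} and would complete the proof. The only genuine subtlety is the trigonometric bookkeeping ensuring that the $C$-term annihilates the cosine sum; once the substitution $\sin\tfrac{(\alpha+1)\pi}{\alpha}=-\sin\tfrac{\pi}{\alpha}$ is spotted, the cancellation is immediate, and the positivity bound $0<C<1$ from~\eqref{eq:with sine} serves as a useful consistency check.
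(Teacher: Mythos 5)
Your proof is correct, and up to the expansion \eqref{eq:sharp} it follows the same skeleton as the paper: Taylor-expand the smooth terms, handle the integral via Lemmas~\ref{le:int} and~\ref{le:int asympt}, observe that the constant term $\lceil\alpha\rceil-\alpha$ cancels, and reduce everything to the sign of the coefficient of $\lambda$. Where you genuinely depart is the final step. The paper never evaluates that coefficient: it estimates $2\sum_{j=1}^m \cos(2j\pi/\alpha)\leq 2\lfloor\alpha/3\rfloor$, invokes \eqref{eq:with sine} to bound the sine ratio by~$1$, and is then forced into the case analysis around \eqref{eq:final goal} --- crude counting for $\alpha>6$, and ad hoc linear bounds on the sines for $\alpha\in(4,5)$ and $\alpha\in(2,3)$ ending in quadratic inequalities. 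You instead compute the bracket in closed form: the Dirichlet-kernel identity $1+2\sum_{j=1}^m\cos(2j\pi/\alpha)=\sin(\lceil\alpha\rceil\pi/\alpha)/\sin(\pi/\alpha)$ (valid since $\lceil\alpha\rceil=2m+1$ and $\sin(\pi/\alpha)\neq0$ for $\alpha>2$), combined with $\lfloor\alpha\rfloor+1=\lceil\alpha\rceil$ and the reflection $\sin\bigl((\alpha+1)\pi/\alpha\bigr)=-\sin(\pi/\alpha)$, collapses it to exactly $-\alpha$. The identity checks out both symbolically and numerically (e.g.\ for $\alpha=5/2$: $-1.618\ldots+1-2.5+0.618\ldots=-2.5$), so the left-hand side of \eqref{eq:goal3} equals $-\alpha^2 k\,\lambda+\mathrm{o}(\lambda)$, which is strictly more than the paper establishes. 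This buys you three things: all case distinctions on $\alpha$ disappear; the paper's remark after \eqref{eq:sharp} that the left-hand side is $\mathrm{O}(\lambda)$ is sharpened to an exact first-order asymptotic; and it reveals that the chain of estimates \eqref{eq:F lower} and \eqref{eq:est sum} is tight to first order in $\lambda$, consistent with the direct expansion $\alpha(1+\lambda)^{\alpha k}-\alpha\sum_{j}\binom{\alpha k}{\alpha j}\lambda^{\alpha j}=\alpha^2 k\,\lambda+\mathrm{o}(\lambda)$ for $\alpha>2$. The paper's estimate-based route gains nothing here by comparison; your cancellation is the cleaner and stronger argument, with the bound $0<C<1$ from \eqref{eq:with sine} serving, as you say, only as a consistency check.
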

\begin{proof}
Since $m=(\lceil \alpha \rceil-1) /2,$ expanding the first terms of~\eqref{eq:goal3} gives
\begin{multline*}
  \lceil \alpha \rceil -\alpha
  +\alpha k\bigg( 2\sum_{j=1}^m \cos \frac{2j\pi}{\alpha}+1-\alpha\bigg)\lambda
    +\mathrm{O}(\lambda^2)\\
  -\frac{\alpha \sin \alpha \pi}{\pi} \int_0^{1/\lambda} \frac{s^{\alpha-1}(1-\lambda s)^{\alpha k}}
    {s^{2\alpha}-2s^\alpha \cos\alpha \pi +1} ds.
\end{multline*}
By Lemmas~\ref{le:int} and~\ref{le:int asympt}, this further equals
\begin{align}
  \lceil \alpha \rceil -\alpha
  +\alpha& k\bigg( 2\sum_{j=1}^m \cos \frac{2j\pi}{\alpha}+1-\alpha\bigg)\lambda \notag \\
  &\qquad\qquad\quad+\alpha - \lceil \alpha \rceil +\alpha k \lambda 
    \frac{\sin\big(\frac{\lfloor \alpha \rfloor+1}{\alpha}\pi\big)}
        {\sin\big(\frac{\alpha+1}{\alpha}\pi\big)} +\mathrm{o}(\lambda)\notag \\
   &=\alpha k\Bigg( 2\sum_{j=1}^m \cos \frac{2j\pi}{\alpha}+1-\alpha
     +\frac{\sin\big(\frac{\lfloor \alpha \rfloor+1}{\alpha}\pi\big)}
        {\sin\big(\frac{\alpha+1}{\alpha}\pi\big)}
     \Bigg)\lambda +\mathrm{o}(\lambda), \label{eq:sharp}
\end{align}
and we see that~\eqref{eq:goal3} becomes sharp as $\lambda\downarrow0,$ as the left
hand side is $\mathrm{O}(\lambda).$ This is no surprise, since the
inequality~\eqref{eq:cnc2} we are proving is obviously sharp for $\lambda\downarrow0.$ 
It remains to show that the coefficient of~$\lambda$ in~\eqref{eq:sharp}
is negative.
Similarly to~\eqref{eq:tbc}, we have the bound
\begin{align*}
  2\sum_{j=1}^m \cos \frac{2j\pi}{\alpha}\leq 
    2\sum_{1\leq j\leq \alpha/3}\cos \frac{2j\pi}{\alpha}
    \leq 2\Big\lfloor \frac{\alpha}{3} \Big \rfloor.
\end{align*}
Thus, we wish to show that
\begin{equation}\label{eq:final goal}
   2\Big\lfloor \frac{\alpha}{3} \Big \rfloor +1-\alpha
   +\frac{\sin\big(\frac{\lfloor \alpha \rfloor+1}{\alpha}\pi\big)}
        {\sin\big(\frac{\alpha+1}{\alpha}\pi\big)} <0.
\end{equation}
As the sine quotient is $<1,$ by~\eqref{eq:with sine}, this is clearly true for $\alpha>6.$
Now consider $\alpha\in(4,5).$ It is easy to verify that
\[
  \sin\Big(\frac{4+1}{\alpha}\pi\Big)
  < -\frac{1}{\sqrt2}+ \frac{6\pi(\alpha-4)}{16\sqrt{2}},\quad 4<\alpha<5,
\]
and that
\[
  \sin\Big(\frac{\alpha+1}{\alpha}\pi\Big)
  > -\frac{1}{\sqrt2}+ \frac{\pi(\alpha-4)}{20\sqrt{2}},\quad 4<\alpha<5.
\]
Using these estimates in~\eqref{eq:final goal} leads to a quadratic inequality,
which is straightforward to check. The proof
of~\eqref{eq:final goal} for $\alpha\in(2,3)$ is similar.
\end{proof}
Clearly, Lemmas~\ref{le:decrease} and~\ref{le:la small} establish~\eqref{eq:goal3}.
As argued above~\eqref{eq:goal3}, we have thus proven~\eqref{eq:goal2},
hence~\eqref{eq:cnc2}, and the proof of Theorem~\ref{thm:cnc} is complete.

\bibliographystyle{siam}
\bibliography{literature}

\end{document}